\def\*#1{\mathbf{#1}}
\newtheorem{thm}{Theorem}[section]
\newtheorem{assum}[thm]{Assumption}
\newtheorem{lem}[thm]{Lemma}
\newtheorem{cor}[thm]{Corollary}
\newtheorem{defi}{Definition}
\newcommand{\R}{\mathbb{R}}
\newcommand{\mL}{\mathcal{L}}
\begin{document}
%
\title{On the Convergence of NEAR-DGD for Nonconvex Optimization with Second Order Guarantees}

\author{\IEEEauthorblockN{Charikleia Iakovidou and Ermin Wei}
\IEEEauthorblockA{Department of Electrical and Computer Engineering\\
Northwestern University\\
Evanston, IL \\
Email: chariako@u.northwestern.edu, ermin.wei@northwestern.edu}
\thanks{This work was supported by NSF NRI 2024774.}
}


%


\maketitle

\begin{abstract}
We consider the setting where the nodes of an undirected, connected network collaborate to solve a shared objective modeled as the sum of smooth functions. We assume that each summand is privately known by a unique node. NEAR-DGD is a distributed first order method which permits adjusting the amount of communication between nodes relative to the amount of computation performed locally in order to balance convergence accuracy and total application cost. In this work, we generalize the convergence properties of a variant of NEAR-DGD from the strongly convex to the nonconvex case. Under mild assumptions, we show convergence to minimizers of a custom Lyapunov function. Moreover, we demonstrate that the gap between those minimizers and the second order stationary solutions of the original problem can become arbitrarily small depending on the choice of algorithm parameters. Finally, we accompany our theoretical analysis with a numerical experiment to evaluate the empirical performance of NEAR-DGD in the nonconvex setting.
\end{abstract}

\begin{IEEEkeywords}
distributed optimization, decentralized gradient method, nonconvex optimization, second-order guarantees.
\end{IEEEkeywords}


%
\IEEEpeerreviewmaketitle

\section{Introduction}
We focus on optimization problems where the cost function can be modeled as a summation of $n$ components,
\begin{equation}
\label{eq:prob_orig}
    \min_{x \in \mathbb{R}^p} f(x) = \sum_{i=1}^n f_i(x),
\end{equation}
where  $f: \mathbb{R}^p \rightarrow \mathbb{R}$ is a smooth and (possibly) nonconvex function.

Problems of this type frequently arise in a variety of decentralized systems such as wireless sensor networks, smart grids and systems of autonomous vehicles.
A special case of this setting involves a connected, undirected network of $n$ nodes $\mathcal{G}(\mathcal{V},\mathcal{E})$, where $\mathcal{V}$ and $\mathcal{E}$ denote the sets of nodes and edges, respectively. Each node $i \in \mathcal{V}$ has private access to the cost component $f_i$ and maintains a local estimate $x_i$ of the global decision variable $x$. This leads to the following equivalent reformulation of Problem~\eqref{eq:prob_orig},
\begin{equation}
    \label{eq:consensus_prob}
    \begin{split}
    \min_{x_i \in \mathbb{R}^{p}} \sum_{i=1}^n f_i(x_i), \quad \text{s.t. } x_i=x_j, \quad \forall(i,j) \in \mathcal{E}.
    \end{split}
\end{equation}
One of the first algorithms proposed for the solution of Problem~\eqref{eq:consensus_prob} when the functions $f_i$ are convex is the Distributed (Sub)Gradient Descent (DGD) method~\cite{NedicSubgradientConsensus}, which relies on the combination of two elements: $i$) local gradient steps on the functions $f_i$ and $ii$) calculations of weighted averages of local and neighbor variables $x_i$. For the remainder of this work, we will be referring to these two procedures as \emph{computation} and \emph{consensus} (or \emph{communication}) steps, respectively. While DGD has been shown to converge to an approximate solution of Problem~\eqref{eq:consensus_prob} under constant steplengths, a subset of methods known as gradient tracking algorithms  ~\cite{extra,diging,harnessing_smoothness} overcomes this limitation by iteratively estimating the average gradient between nodes.

The convergence of DGD when the function $f$ is nonconvex has been studied in~\cite{zeng_nonconvex_2018}. NEXT~\cite{di_lorenzo_next_2016}, SONATA~\cite{scutari_distributed_2019,daneshmand_second-order_2020}, xFilter~\cite{sun_distributed_2019} and MAGENTA~\cite{hong_divergence_2020}, are some examples of distributed methods that utilize gradient tracking and can handle nonconvex objectives. Other approaches include primal-dual algorithms~\cite{hong_distributed_2018,hong_gradient_2018} (we note that primal-dual and gradient tracking algorithms are equivalent in some cases~\cite{diging}), the perturbed push-sum method~\cite{tatarenko_non-convex_2016}, zeroth order methods~\cite{hajinezhad_zone_2019,tang_distributed_2020}, and stochastic gradient algorithms~\cite{bianchi_convergence_2011,lian_can_nodate,tang_d2_2018,hong_semi_grad}.

Providing second order guarantees when Hessian information is not available is a challenging task. As a result, the majority of the works listed in the previous paragraph establish convergence to critical points only. A recent line of research leverages existing results from dynamical systems theory and the structural properties of certain problems (which include matrix factorization, phase retrieval and dictionary learning, among others) to demonstrate that several centralized first order algorithms  converge to minimizers almost surely when initialized randomly~\cite{lee2017firstorder}. Specifically, if the objective function satisfies the \emph{strict saddle} property, namely, if all critical points are either strict saddles or minimizers, then many first order methods converge to saddles only if they are initialized in a low-dimensional manifold with measure zero. 
Using similar arguments, almost sure convergence to second order stationary points of Problem~\eqref{eq:consensus_prob} is proven in~\cite{daneshmand_second-order_2020} for DOGT, a gradient tracking algorithm for directed networks, and in~\cite{hong_gradient_2018} for the first order primal-dual algorithms GPDA and GADMM. The convergence of DGD with constant steplength to a neighborhood of the minimizers of Problem~\eqref{eq:consensus_prob} is also shown in~\cite{daneshmand_second-order_2020}. The conditions under which the Distributed Stochastic Gradient method (D-SGD), and Distributed Gradient Flow (DGF), a continuous-time approximation of DGD, avoid saddle points are studied in~\cite{swenson2020distributed_j} and~\cite{swenson2020distributed_c}, respectively. Finally, the authors of~\cite{tatarenko_non-convex_2016} prove almost sure convergence to local minima under the assumption that the objective function has no saddle points. 

Given the diversity of distributed systems in terms of computing power, connectivity and energy consumption, among other concerns, the ability to adjust the relative amounts of communication and computation on a case-by-case basis is a desirable attribute for a distributed optimization algorithm. While some existing methods are designed to minimize overall communication load (for instance, the authors of~\cite{sun_distributed_2019} employ Chebyshev polynomials to improve communication complexity), all of the methods listed above perform fixed amounts of computation and communication at every iteration and lack adaptability to heterogeneous environments.

\subsection{Contributions}
In this work, we extend the convergence analysis of the NEAR-DGD method, originally proposed in~\cite{berahas_balancing_2019}, from the strongly convex to the nonconvex setting. NEAR-DGD is a distributed first order method with a flexible framework, which allows for the exchange of computation with communication in order to reach a target accuracy level while simultaneously maintaining low overall application cost. We design a custom Lyapunov function which captures both progress on Problem~\eqref{eq:prob_orig} and distance to consensus, and demonstrate that under relatively mild assumptions, a variant of NEAR-DGD converges to the set of critical points of this Lyapunov function and to approximate critical points of the function $f$ of Problem~\eqref{eq:prob_orig}. Moreover, we show that the distance between the limit points of NEAR-DGD and the critical points of $f$ can become arbitrarily small by appropriate selection of algorithm parameters. Finally, we employ recent results based on dynamical systems theory to prove that the same variant of NEAR-DGD almost surely avoids the saddle points of the Lyapunov function when initialized randomly. Our analysis is shorter and simpler compared to other works due to the convenient form of our Lyapunov function. The implication is that NEAR-DGD asymptotically converges to second order stationary solutions of Problem~\eqref{eq:prob_orig} as the values of algorithm parameters increase.


\subsection{Notation}
In this paper, all vectors are column vectors. We will use the notation $v'$ to refer to the transpose of a vector $v$. The concatenation of local vectors $v_{i}  \in \mathbb{R}^p$ is denoted by $\mathbf{v} = [v_i',...,v_n']' \in \mathbb{R}^{np}$ with a lowercase boldface letter. The average of the vectors $v_1,...,v_n$ contained in $\*v$ will be denoted by $\bar{v}$, i.e. $\bar{v}=\frac{1}{n}\sum_{i=1}^n v_i$. We use uppercase boldface letters for matrices and will denote the element in the $i^{th}$ row and $j^{th}$ column of matrix $\*H$ with $h_{ij}$. We will refer to the $i^{th}$ (real) eigenvalue in ascending order (i.e. 1st is the smallest) of a matrix $\*H$ as $\lambda_i(\*H)$. We use the notations $I_p$ and $1_n$ for  the identity matrix of dimension $p$ and the vector of ones of dimension $n$, respectively.
We will use $\|\cdot\|$ to denote the $l_2$-norm, i.e. for $v \in \mathbb{R}^p$ we have $\left\|v\right\|= \sqrt{\sum_{i=1}^p \left[v\right]_i^2}$ where $\left[v\right]_i$ is the $i$-th element of $v$. The inner product of vectors $v,u$ will be denoted by $\langle v, u \rangle$.  The symbol $\otimes$ will denote the Kronecker product operation. Finally, we define the averaging matrix $\*M:=\left(\frac{1_n 1'_n}{n} \otimes I_p\right)$.


\subsection{Organization}

The rest of this paper is organized as follows. We briefly review the NEAR-DGD method and list our assumptions for the rest of this work in Section~\ref{sec:method}. We analyze the convergence properties of NEAR-DGD when the function $f$ of Problem~\eqref{eq:prob_orig} is nonconvex in Section~\ref{sec:analysis}. Finally, we present the results of a numerical experiment we conducted to assess the empirical performance of NEAR-DGD in the nonconvex setting in Section~\ref{sec:numerical}, and conclude this work in Section~\ref{sec:conclusion}.

\section{The NEAR-DGD method}
\label{sec:method}
In this section, we list our assumptions for the remainder of this work and briefly review the NEAR-DGD method, originally proposed for strongly convex optimization in~\cite{berahas_balancing_2019}. We first introduce the following compact reformulation of Problem~\eqref{eq:consensus_prob},
\begin{equation}
    \label{eq:consensus_prob2}
    \begin{split}
    \min_{\*x \in \mathbb{R}^{np}} \*f\left(\*x\right):=\sum_{i=1}^n f_i(x_i), &\quad \text{s.t. } \left(\*W \otimes I_p \right)\*x=\*x,
    \end{split}
\end{equation}
where $\*x=[x_1',...,x_n']'$ in $\mathbb{R}^{np}$ is the concatenation of the local variables $x_i$, $\*f:\mathbb{R}^{np}\rightarrow \mathbb{R}$ and $\*W \in \mathbb{R}^{n \times n}$ is a matrix satisfying the following condition.
\begin{assum} \textbf{(Consensus matrix)}
\label{assum:symmetry} The matrix $\*W \in \mathbb{R}^{n \times n}$
has the following properties: i) symmetry, ii) double stochasticity, iii) $w_{ij} > 0$ if and only if $(ij) \in \mathcal{E}$ or $i=j$ and $w_{ij} = 0$ otherwise and iv) positive-definiteness.
\end{assum}
We can construct a matrix $\tilde{\*W}$ satisfying properties $(i)-(iii)$ of Assumption~\ref{assum:symmetry} by defining its elements to be max degree or Metropolis-Hastings weights~\cite{XIAO200733}, for instance. Such matrices are not necessarily positive-definite, so we can further enforce property $(iv)$ using simple linear transformations (for example, we could define $\*W = (1-\delta)^{-1}(\tilde{\*W} - \delta I_n)$, where $\delta < \lambda_1(\tilde{\*W})$ is a constant). For the rest of this work, we will be referring to the $2^{nd}$ largest eigenvalue of $\*W$ as $\beta$, i.e. $\beta = \lambda_{n-1}(\*W)$.


We adopt the following standard assumptions for the global function~$\*f$ of Problem~\ref{eq:consensus_prob2}.
\begin{assum} \textbf{(Global Lipschitz gradient)}
\label{assum:smoothness}
The global objective function $\*f:\mathbb{R}^{np} \rightarrow \mathbb{R}$ has \mbox{$L$-Lipschitz} continuous gradients, i.e. $ \|\nabla \*f(\*x) - \nabla \*f(\*y)\| \leq L\|\*x-\*y\|,\quad \forall \*x,\*y \in \mathbb{R}^{np}$.
\end{assum}

\begin{assum}\textbf{(Coercivity)}
\label{assum:coercive}
The global objective function $\*f:\mathbb{R}^{np} \rightarrow \mathbb{R}$ is coercive, i.e. $\lim_{k \rightarrow \infty} \*f(\*x_k) = \infty$ for every sequence $\{\*x_k\}$ such that $\|\*x_k\| \rightarrow \infty$.
\end{assum}


\subsection{The NEAR-DGD method}
Starting from (arbitrary) initial points $y_{i,0}=x_{i,0}$, the local iterates of NEAR-DGD at node $i \in \mathcal{V}$ and iteration count $k$ can be expressed as,
\begin{subequations}
\begin{align}
&x_{i,k} = \sum_{j=1}^n w^{t(k)}_{ij} y_{j,k},\label{eq:near_dgd_local_x_ORIG}\\
&y_{i,k+1}= x_{i,k} - \alpha \nabla f_i \left(x_{i,k}\right),\label{eq:near_dgd_local_y_ORIG}
\end{align}
\end{subequations}
where $\{t(k)\}$ is a predefined sequence of consensus rounds per iteration, $\alpha>0$ is a positive steplength and $w_{ij}^{t(k)}$ is the element in the $i^{th}$ row and $j^{th}$ column of the matrix $\*W^{t(k)}$, resulting from the composition of $t(k)$ consensus operations,
\begin{equation*}
    \*W^{t(k)}=\underbrace{\*W \cdot \*W \cdot ... \cdot \*W}_{t(k)\text{ times}} \in \mathbb{R}^{n \times n}.
\end{equation*}
The system-wide iterates of NEAR-DGD can be written as,
\begin{subequations}
\begin{align}
&\*x_k = \*Z^{t(k)} \*y_k \label{eq:near_dgd_x}\\
    &\*y_{k+1} = \*x_{k} - \alpha \nabla \*f (\*x_{k}) \label{eq:near_dgd_y},
\end{align}
\end{subequations}
where $\*x_k=[x_{1,k}',...,x_{n,k}']' \in \mathbb{R}^{np}$, $\*y_{k+1}=[y_{1,k+1}',...,y_{n,k+1}']' \in \mathbb{R}^{np}$ and $\*Z^{t(k)} = (\*W^{t(k)} \otimes I_p) \in \mathbb{R}^{np \times np}$.

The sequence of consensus rounds per iteration $\{t(k)\}$ can be suitably chosen  to balance convergence accuracy and total cost on a per-application basis. When the functions $f_i$ are strongly convex, NEAR-DGD paired with an increasing sequence $\{t(k)\}$ converges to the optimal solution of Problem~\eqref{eq:consensus_prob2}, and achieves exact convergence with geometric rate (in terms of gradient evaluations) when $t(k)=k$~\cite{berahas_balancing_2019}.

\section{Convergence Analysis}
\label{sec:analysis}
In this section, we present our theoretical results on the convergence of a variant of NEAR-DGD where the number of consensus steps per iteration is fixed, i.e. $t(k)=t$ in~\eqref{eq:near_dgd_local_x_ORIG} and~\eqref{eq:near_dgd_x} for some $t \in \mathbb{N}^+$. We will refer to this method as NEAR-DGD$^t$. First, we introduce the following Lyapunov function, which will play a key role in our analysis,
\begin{equation}
\label{eq:lyapunov_f}
   \mathcal{L}_t\left(\*y\right)= \*f\left(\*Z^t\*y\right) +\frac{1}{2\alpha}\left\|\*y\right\|^2_{\*Z^t}-\frac{1}{2\alpha}\left\|\*y\right\|^2_{\*Z^{2t}}.
\end{equation}
Using~\eqref{eq:lyapunov_f}, we can express the $\*x_k$ iterates of NEAR-DGD$^t$ as,
\begin{equation}
\label{lem:grad_x}
    \begin{split}
        \*x_{k+1}= \*x_k - \alpha \nabla\mathcal{L}_t\left(\*y_k\right).
    \end{split}
\end{equation}
We need one more assumption on the geometry of the Lyapunov function $\mL_t$ to guarantee the convergence of NEAR-DGD$^t$. Namely, we require $\mL_t$  to be "sharp" around its critical points, up to a reparametrization. This property is formally summarized below.
\begin{defi}
\label{def:kl_prop}
\textbf{(Kurdyka-\L ojasiewicz (KL) property)}~\cite{attouch_convergence_2013} A function $h:\mathbb{R}^p\rightarrow \mathbb{R} \cup \{+\infty\}$ has the KL property at $x^\star \in \text{dom}(\partial h)$ if there exists $\eta \in (0,+\infty]$, a neighborhood $U$ of $x^\star$, and a continuous concave function $\phi: [0,\eta)\rightarrow \mathbb{R}^+$ such that: $i$) $\phi(0) = 0$, $ii$) $\phi$ is $\mathcal{C}^1$ (continuously differentiable) on $(0,\eta)$, $iii$) for all $s \in (0,\eta)$, $\phi^\prime (s)>0$, and $iv$) for all $x \in U \cap \{x: h(x^\star)< h(x)< h(x^\star) +\eta\}$, the KL inequality holds:
    \begin{equation*}
    \label{eq:kl_inequality}
        \phi^\prime (h(x)-h(x^\star)) \cdot \text{dist}(0,\partial h(x)) \geq 1.
    \end{equation*}
Proper lower 
semicontinuous functions which satisfy the KL inequality at each point of $\text{dom}(\partial h)$ are called KL functions.
\end{defi}
\begin{assum}\textbf{(KL Lyapunov function)}
\label{assum:kl}
The Lyapunov function $\mL_t:\mathbb{R}^{np} \rightarrow \mathbb{R}$ is a KL function.
\end{assum}
Assumption~\ref{assum:kl} covers a broad range of functions, including real analytic, semialgebraic and globally subanalytic functions (see~\cite{attouch_proximal_2013} for more details). For instance, if the function $\*f$ is real analytic, then $\mL_t$ is the sum of real analytic functions and by extension KL.

\subsection{Convergence to critical points}



In this subsection, we demonstrate that the $\*y_k$ iterates of NEAR-DGD$^t$ converge to a critical point of the Lyapunov function $\mL_t$~\eqref{eq:lyapunov_f}. We assume that Assumptions~\ref{assum:symmetry}-\ref{assum:coercive} and Assumption~\ref{assum:kl} hold for the rest of this work. We begin our analysis by showing that the sequence $\{\mL_t(\*y_k)\}$ is non-increasing in the following Lemma. 
\begin{lem} \textbf{(Sufficient Descent)}
\label{lem:suff_desc}
Let \{$\*y_k$\} be the sequence of NEAR-DGD$^t$ iterates generated by~\eqref{eq:near_dgd_y} and suppose that the steplength $\alpha$ satisfies $\alpha < 2/L$, where $L$ is defined in Assumption~\ref{assum:smoothness}. Then the following inequality holds for the sequence $\{\mL_t(\*y_k)\}$,
\begin{equation*}
    \begin{split}
      \mathcal{L}_t\left(\*y_{k+1}\right) &\leq \mathcal{L}_t\left(\*y_k\right) - \rho \left \| \*y_{k+1}-\*y_k\right\|^2,
    \end{split}
\end{equation*}
where $\rho=(2\alpha)^{-1}\min_i \left(\lambda^t_i(\*Z)\left(1+\left(1-\alpha L\right)\lambda^t_i(\*Z)\right)\right) > 0$.
\end{lem}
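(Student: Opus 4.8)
The plan is to obtain the descent inequality directly, by combining the descent lemma (a standard consequence of the $L$-Lipschitz gradient property, Assumption~\ref{assum:smoothness}) with \emph{exact} expansions of the two quadratic terms of $\mL_t$, and then exploiting the identity~\eqref{lem:grad_x} to make the cross terms collapse. Write $\*d_k:=\*y_{k+1}-\*y_k$, so that $\*x_{k+1}-\*x_k=\*Z^t\*d_k$ by the consensus step~\eqref{eq:near_dgd_x}. First, the descent lemma applied to $\*f$ at $\*x_{k+1}$ and $\*x_k$ gives
\[
\*f(\*x_{k+1})\le \*f(\*x_k)+\langle \*Z^t\nabla\*f(\*x_k),\,\*d_k\rangle+\tfrac{L}{2}\|\*d_k\|^2_{\*Z^{2t}},
\]
where I used $\|\*Z^t\*d_k\|^2=\*d_k'\*Z^{2t}\*d_k$ (and $\*Z^t$ is symmetric since $\*W$ is). Second, for the quadratic terms I would invoke the polarization identity $\|\*a\|^2_{\*A}-\|\*b\|^2_{\*A}=2\langle\*A\*b,\*a-\*b\rangle+\|\*a-\*b\|^2_{\*A}$, valid for any symmetric $\*A$, with $\*A=\*Z^t$ and $\*A=\*Z^{2t}$.

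Summing the three contributions, the terms linear in $\*d_k$ assemble exactly into $\langle\nabla\mL_t(\*y_k),\*d_k\rangle$ with $\nabla\mL_t(\*y)=\*Z^t\nabla\*f(\*Z^t\*y)+\tfrac{1}{\alpha}(\*Z^t-\*Z^{2t})\*y$. Now~\eqref{lem:grad_x} together with~\eqref{eq:near_dgd_x} gives $\nabla\mL_t(\*y_k)=\tfrac{1}{\alpha}(\*x_k-\*x_{k+1})=-\tfrac{1}{\alpha}\*Z^t\*d_k$, so that linear term equals $-\tfrac{1}{\alpha}\|\*d_k\|^2_{\*Z^t}$. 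Collecting the remaining quadratic-in-$\*d_k$ terms then yields
\[
\mL_t(\*y_{k+1})-\mL_t(\*y_k)\le -\tfrac{1}{2\alpha}\|\*d_k\|^2_{\*Z^t}-\tfrac{1-\alpha L}{2\alpha}\|\*d_k\|^2_{\*Z^{2t}}=-\tfrac{1}{2\alpha}\,\*d_k'\big(\*Z^t+(1-\alpha L)\*Z^{2t}\big)\*d_k.
\]

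It remains to lower-bound the quadratic form. As $\*Z^t+(1-\alpha L)\*Z^{2t}$ is symmetric, $\*d_k'\big(\*Z^t+(1-\alpha L)\*Z^{2t}\big)\*d_k\ge\lambda_{\min}\|\*d_k\|^2$, and since $\*Z=\*W\otimes I_p$ has the eigenvalues of $\*W$ (each with multiplicity $p$) we get $\lambda_{\min}=\min_i\big(\lambda_i^t(\*Z)+(1-\alpha L)\lambda_i^{2t}(\*Z)\big)=\min_i\lambda_i^t(\*Z)\big(1+(1-\alpha L)\lambda_i^t(\*Z)\big)$; this is the claimed $\rho$ up to the $(2\alpha)^{-1}$ factor. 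For $\rho>0$: by Assumption~\ref{assum:symmetry}, $\*W$ is positive definite and doubly stochastic, hence $\lambda_i(\*W)\in(0,1]$ and $\lambda_i^t(\*Z)\in(0,1]$; then $1+(1-\alpha L)\lambda_i^t(\*Z)\ge\min\{1,\,2-\alpha L\}>0$ because $\alpha L<2$, while $\lambda_i^t(\*Z)>0$, so each of the finitely many terms in the minimum is strictly positive. The one genuinely load-bearing step is the use of~\eqref{lem:grad_x}: it is precisely what turns the otherwise indefinite cross term $\langle\nabla\mL_t(\*y_k),\*d_k\rangle$ into the clean negative quantity $-\tfrac{1}{\alpha}\|\*d_k\|^2_{\*Z^t}$, so that no lossy Young-type splitting of the cross term is needed; the remainder is routine algebra and eigenvalue bookkeeping.
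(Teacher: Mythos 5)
Your proof is correct and takes essentially the same route as the paper's: a descent-lemma bound on $\*f(\*x_{k+1})$, exact handling of the two quadratic terms, and use of the algorithmic update to collapse the cross term, arriving at the same intermediate inequality $\mL_t(\*y_{k+1})-\mL_t(\*y_k)\leq -\tfrac{1}{2\alpha}\|\*d_k\|^2_{\*Z^t}-\bigl(\tfrac{1}{2\alpha}-\tfrac{L}{2}\bigr)\|\*d_k\|^2_{\*Z^{2t}}$ and the identical $\rho=\lambda_1\bigl((2\alpha)^{-1}\*Z^t(I+(1-\alpha L)\*Z^t)\bigr)$. The only difference is bookkeeping: you channel the cross term through $\nabla\mL_t$ and~\eqref{lem:grad_x}, whereas the paper substitutes $\nabla\*f(\*x_k)=\alpha^{-1}(\*x_k-\*y_{k+1})$ from~\eqref{eq:near_dgd_y} and expands the inner products directly — algebraically equivalent steps.
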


\begin{proof} Combining~\eqref{eq:near_dgd_x} and~\eqref{eq:lyapunov_f}, we obtain for $k\geq0$,
\begin{equation}
\label{eq:suff_desc1}
\begin{split}
     \mL_t(\*y_{k})= \*f(\*x_{k}) + \frac{1}{2\alpha}\langle \*y_{k},\*x_{k}\rangle - \frac{1}{2\alpha}\|\*x_{k}\|^2.
     \end{split}
\end{equation}

Let $\*d_x := \*x_{k+1}-\*x_k$. Assumption~\ref{assum:smoothness} then yields $\*f(\*x_{k+1})
        \leq \*f(\*x_k) + \langle \nabla \*f(\*x_k),\*d_x\rangle + \frac{L}{2}\|\*d_x\|^2 = \*f(\*x_k) - \frac{1}{\alpha}\langle  \*y_{k+1}-\*x_k,\*d_x\rangle + \frac{L}{2}\|\*d_x\|^2,$
where we acquire the last equality from~\eqref{eq:near_dgd_y}. Substituting this relation in~\eqref{eq:suff_desc1} applied at the $(k+1)^{th}$ iteration, we obtain,
\begin{equation*}
    \begin{split}
        &\mL_t(\*y_{k+1})
        \leq \*f(\*x_k) - \frac{1}{2\alpha}\langle  \*y_{k+1}-\*x_k,\*d_x\rangle + \frac{L}{2}\|\*d_x\|^2 \\
        &\quad+ \frac{1}{2\alpha}\langle \*y_{k+1},\*x_{k+1}\rangle - \frac{1}{2\alpha}\|\*x_{k+1}\|^2\\
        &= \mL_t(\*y_k) - \frac{1}{2\alpha}\langle \*y_{k},\*x_{k}\rangle + \frac{1}{2\alpha}\|\*x_{k}\|^2- \frac{1}{\alpha}\langle  \*y_{k+1}-\*x_k,\*d_x\rangle \\
        &\quad + \frac{L}{2}\|\*d_x\|^2+ \frac{1}{2\alpha}\langle \*y_{k+1},\*x_{k+1}\rangle - \frac{1}{2\alpha}\|\*x_{k+1}\|^2,
    \end{split}
\end{equation*}
    where we obtain the equality after further application of~\eqref{eq:suff_desc1}.
After setting $\*d_y := \*y_{k+1}-\*y_k$ and re-arranging the terms, we obtain,
\begin{equation*}
    \begin{split}
        \mL_t(\*y_{k+1})
        &\leq \mL_t(\*y_k) - \frac{1}{2\alpha}\langle \*y_{k},\*x_{k}\rangle + \frac{1}{\alpha}\langle  \*y_{k+1},\*x_k\rangle \\
        &\quad - \frac{1}{2\alpha}\langle \*y_{k+1},\*x_{k+1}\rangle - \left(\frac{1}{2\alpha} - \frac{L}{2}\right)\|\*d_x\|^2 \\
        &=\mL_t(\*y_k) - \frac{1}{2\alpha}\| \*y_{k}\|^2_{\*Z^t} + \frac{1}{\alpha}\langle  \*y_{k+1},\*y_k\rangle_{\*Z^t} \\
        &\quad - \frac{1}{2\alpha}\| \*y_{k+1}\|^2_{\*Z^t} - \left(\frac{1}{2\alpha} - \frac{L}{2}\right)\|\*d_x\|^2 \\
        &=\mL_t(\*y_k) - \frac{1}{2\alpha}\| \*d_y\|^2_{\*Z^t} - \left(\frac{1}{2\alpha} - \frac{L}{2}\right)\|\*d_x\|^2.
    \end{split}
\end{equation*}
Let $\*H:=(2\alpha)^{-1}\*Z^t \left(I + (1-\alpha L)\*Z^t\right)$, which is a positive definite matrix due to Assumption~\ref{assum:symmetry} and the fact that $\alpha <2/L$. Moreover, $\|\*d_x\|^2 = \|\*d_y\|^2_{\*Z^{2t}}$ by Eq.~\eqref{eq:near_dgd_x}. We can then re-write the immediately previous relation as $\mL_t(\*y_{k+1})
        \leq \mL_t(\*y_k)  -\| \*y_{k+1}-\*y_k\|^2_{\*H}$.
Applying the definition of $\rho= \lambda_1(\*H)$ concludes the proof.
\end{proof}
An important consequence of Lemma~\ref{lem:suff_desc} is that NEAR-DGD$^t$ can tolerate a bigger range of steplengths than previously indicated ($\alpha < 2/L$ vs. $\alpha < 1/L$ in~\cite{berahas_balancing_2019}). Moreover, Lemma~\ref{lem:suff_desc} implies that the sequence $\{\mL_t(\*y_k)\}$ is upper bounded by $\mL_t(\*y_0)$. We use this fact to prove that the iterates of NEAR-DGD$^t$ are also bounded in the next Lemma.
\begin{lem}\textbf{(Boundedness)}
\label{lemma:bounded}
Let $\{\*x_k\}$ and $\{\*y_k\}$ be the sequences of NEAR-DGD$^t$ ($t(k)=t$) iterates generated by~\eqref{eq:near_dgd_x} and~\eqref{eq:near_dgd_y}, respectively, from initial point $\*y_0$ and under steplength $\alpha < 2/L$. Then the following hold: $i$) the sequence $\{\mathcal{L}_t(\*y_k)\}$ is lower bounded, and $ii$)
there exist universal positive constants $B_x$ and $B_y$ such that $\|\*x_k\|\leq B_x$ and $\|\*y_{k+1}\|\leq B_y$ for all $k\geq0$ and $t \in \mathbb{N}^+$.
\end{lem}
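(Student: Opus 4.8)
The plan is to establish the two claims in sequence, using the Sufficient Descent lemma and the coercivity assumption as the main tools.

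\textbf{Lower boundedness of $\{\mathcal{L}_t(\*y_k)\}$.} First I would rewrite the Lyapunov function in a form that makes its lower bound transparent. Using $\*x_k = \*Z^t \*y_k$ and the fact that $\*Z^t$ is symmetric positive definite, the quadratic part $\frac{1}{2\alpha}(\|\*y\|_{\*Z^t}^2 - \|\*y\|_{\*Z^{2t}}^2) = \frac{1}{2\alpha}\langle \*y, (\*Z^t - \*Z^{2t})\*y\rangle = \frac{1}{2\alpha}\langle \*y, \*Z^t(I - \*Z^t)\*y\rangle$. Since $\*Z^t$ is positive definite with eigenvalues in $(0,1]$ (because $\*W$ is doubly stochastic and positive definite, so its eigenvalues lie in $(0,1]$), the matrix $\*Z^t(I-\*Z^t)$ is positive semidefinite, hence this quadratic term is nonnegative. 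Therefore $\mathcal{L}_t(\*y_k) \geq \*f(\*Z^t \*y_k) = \*f(\*x_k) \geq \inf_{\*x} \*f(\*x)$, and this infimum is finite because $\*f$ is continuous and coercive (Assumption~\ref{assum:coercive}), so it attains a minimum. This gives part $i$).

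\textbf{Boundedness of the iterates.} For part $ii$), Lemma~\ref{lem:suff_desc} gives $\mathcal{L}_t(\*y_k) \leq \mathcal{L}_t(\*y_0)$ for all $k$. Combined with the nonnegativity of the quadratic part just established, $\*f(\*x_k) \leq \mathcal{L}_t(\*y_k) \leq \mathcal{L}_t(\*y_0)$, so the sequence $\{\*f(\*x_k)\}$ is bounded above by a constant independent of $k$ (and of $t$, as long as one checks the bound on $\mathcal{L}_t(\*y_0)$ is $t$-uniform — here $\*y_0 = \*x_0$ is fixed and $\|\*y_0\|_{\*Z^t}^2 - \|\*y_0\|_{\*Z^{2t}}^2 \leq \|\*y_0\|^2 \cdot \max_i \lambda_i^t(\*Z)(1-\lambda_i^t(\*Z)) \leq \frac{1}{4}\|\*y_0\|^2$, and $\*f(\*Z^t\*y_0) \leq \*f(\*y_0) + \ldots$ needs a quick argument, e.g. via $L$-smoothness and $\|\*Z^t\*y_0 - \*y_0\| \leq 2\|\*y_0\|$). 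By coercivity, sublevel sets of $\*f$ are bounded, so there is a universal constant $B_x$ with $\|\*x_k\| \leq B_x$ for all $k$. Then from the update $\*y_{k+1} = \*x_k - \alpha \nabla \*f(\*x_k)$ and $L$-Lipschitz continuity of $\nabla \*f$ (which gives $\|\nabla \*f(\*x_k)\| \leq \|\nabla \*f(0)\| + L\|\*x_k\| \leq \|\nabla \*f(0)\| + L B_x$), we get $\|\*y_{k+1}\| \leq B_x + \alpha(\|\nabla \*f(0)\| + L B_x) =: B_y$, again universal.

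\textbf{Main obstacle.} The routine part is the algebra; the one point requiring care is the $t$-uniformity of all constants, since the statement explicitly quantifies over all $t \in \mathbb{N}^+$. The key observations making this work are that $\max_{\lambda \in (0,1]} \lambda^t(1-\lambda^t) \leq 1/4$ independently of $t$, and that $\|\*Z^t\| \leq 1$ for all $t$ (nonexpansiveness of the consensus operator), so $\|\*Z^t\*y_0 - \*y_0\|$ is bounded independently of $t$; feeding these into the $L$-smoothness bound on $\*f(\*Z^t\*y_0)$ keeps $\mathcal{L}_t(\*y_0)$ bounded uniformly in $t$. Care is also needed to confirm that Assumption~\ref{assum:symmetry}(ii)+(iv) indeed forces the eigenvalues of $\*W$ (hence $\*Z$) into $(0,1]$, which is what both the nonnegativity of the quadratic term and the $\lambda^t(1-\lambda^t)\leq 1/4$ bound rely on.
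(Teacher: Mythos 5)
Your proof is correct, and its overall skeleton matches the paper's: sufficient descent gives $\mL_t(\*y_k)\le\mL_t(\*y_0)$, nonnegativity of the quadratic term $\frac{1}{2\alpha}\|\*y\|^2_{\*Z^t(I-\*Z^t)}$ (eigenvalues of $\*Z$ in $(0,1]$ by Assumption~\ref{assum:symmetry}) gives $\*f(\*x_k)\le\mL_t(\*y_0)$, and a $t$-uniform bound on $\mL_t(\*y_0)$ combined with coercivity yields $\|\*x_k\|\le B_x$. You deviate from the paper in two sub-steps, both validly. First, to control $\*f(\*Z^t\*y_0)$ uniformly in $t$ the paper invokes the Weierstrass theorem on the bounded set $\{\*Z^t\*y_0:\, t\in\mathbb{N}^+\}$, whereas you use the $L$-smoothness inequality together with $\|\*Z^t\*y_0-\*y_0\|\le 2\|\*y_0\|$; your route produces an explicit constant and sidesteps the minor point that the paper's set is only precompact rather than compact. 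Second, for $\|\*y_{k+1}\|$ the paper applies the descent lemma to get $\*f(\*y_{k+1})\le\*f(\*x_k)$ and then uses coercivity a second time, while you bound $\|\*y_{k+1}\|\le\|\*x_k\|+\alpha\|\nabla\*f(\*x_k)\|$ directly, using the linear growth estimate $\|\nabla\*f(\*x_k)\|\le\|\nabla\*f(0)\|+L\|\*x_k\|$ from Assumption~\ref{assum:smoothness}; this is more elementary, gives an explicit $B_y$, and does not even require $\alpha<2/L$ for that particular step, at the price of a constant expressed through $\|\nabla\*f(0)\|$ rather than through a sublevel set of $\*f$. Your sharper estimate $\lambda^t(1-\lambda^t)\le 1/4$ versus the paper's cruder bound is immaterial; both keep $\mL_t(\*y_0)$ bounded uniformly in $t$, which, as you correctly flag, is the one point where care is genuinely needed.
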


\begin{proof} 
By Assumption~\ref{assum:coercive}, the function $\*f$ is lower bounded and therefore $\mL_t$ is also lower bounded (sum of lower bounded functions). This proves the first claim of this Lemma.

To prove the second claim, we first notice that Lemma~\ref{lem:suff_desc} implies that the sequence $\{\mathcal{L}_t(\*y_k)\}$ is upper bounded by $\mL_t(\*y_0)$. Let us define the set $\mathcal{X}_0 := \{\*Z^t \*y_0, t \in \mathbb{N}^+\}$. The set $\mathcal{X}_0$ is compact, since $\|\*Z^t \*y_0\| \leq \|\*y_0\|$ for all $t \in \mathbb{N}^+$ due to the non-expansiveness of $\*Z$. Hence, by the continuity of $\*f$ and the Weierstrass Extreme Value Theorem, there exists $\hat{\*x}_0 \in \mathcal{X}_0$ such that $ \*f(\*x_0) \leq \*f(\hat{\*x}_0)$ for all $\*x_0 \in \mathcal{X}_0$. Moreover, Assumption~\ref{assum:symmetry} yields $\|\*y_0\|^2_{\*Z^t(I-\*Z^t)} \leq \|\*y_0\|^2$ for all positive integers $t$, and therefore $\mL_t(\*y_0) \leq \hat{\mL}$ for all $t \in \mathbb{N}^{+}$, where $\hat{\mL} = \*f(\hat{\*x}_0) + (2\alpha)^{-1}\|\*y_0\|^2$.

Since $\hat{\mL} \geq \mL_t(\*y_0) \geq \mL_t(\*y_k) \geq \*f(\*Z^t \*y_k) = \*f(\*x_k)$ for all $k\geq0$ and $t>0$, the sequence $\{\*f(\*x_k)\}$ is upper bounded. Hence, by Assumption~\ref{assum:coercive}, there exists positive constant $B_x$ such that $\|\*x_k\|\leq B_x$ for $k\geq0$ and $t>0$. Moreover, Assumption~\ref{assum:smoothness} yields $ \*f(\*y_{k+1}) \leq \*f(\*x_k) + \langle \nabla \*f(\*x_k),\*y_{k+1}-\*x_k\rangle + \frac{L}{2}\|\*y_{k+1}-\*x_k\|^2=\*f(\*x_k) - \alpha\|\nabla \*f(\*x_k)\|^2 + \frac{\alpha^2 L }{2}\|\nabla \*f(\*x_k)\|^2= \*f(\*x_k) - \alpha \left(1-\frac{\alpha L}{2}\right)\|\nabla \*f(\*x_k)\|^2 \leq \*f(\*x_k),$
where we obtain the first equality from~\eqref{eq:near_dgd_y} and last inequality from the fact that $\alpha < 2/L$. This relation
combined with Assumption~\ref{assum:coercive} implies that there exists constant $B_y>0$ such that $\|\*y_{k+1}\|\leq B_y$ for $k>0$ and $t>0$, which concludes the proof.
\end{proof}
Next, we use Lemma~\ref{lemma:bounded} to show that the distance between the local iterates generated by NEAR-DGD$^t$ and their average is bounded. 
\begin{lem}\textbf{(Bounded distance to mean)}
\label{lemma:bounded_dist}
Let $x_{i,k}$ and $y_{i,k}$ be the local NEAR-DGD$^t$ iterates produced under steplength $\alpha < 2/L$ by~\eqref{eq:near_dgd_local_x_ORIG} and~\eqref{eq:near_dgd_local_y_ORIG}, respectively, and define the average iterates $\Bar{x}_k := \frac{1}{n}\sum_{i=1}^n x_{i,k}$ and  $\Bar{y}_k := \frac{1}{n}\sum_{i=1}^n y_{i,k}$. Then the distance between the local and the average iterates is bounded for $i=1,...,n$ and $k=1,2,...$, i.e.
\begin{equation*}
     \left\|x_{i,k}-\Bar{x}_k \right\| \leq \beta^t B_y,\text{ and } \left\|y_{i,k}-\Bar{y}_k \right\| \leq B_y,
\end{equation*}
where $B_y$ is a positive constant defined in Lemma~\ref{lemma:bounded}.
\end{lem}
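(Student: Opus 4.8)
The plan is to express the local deviations collectively and then reduce everything to the spectral behaviour of $\*W$ combined with the boundedness already established. If we stack the deviations $x_{i,k}-\Bar{x}_k$ into one vector of $\R^{np}$, that vector is exactly $(I_{np}-\*M)\*x_k$, since the $i$-th block of $\*M\*x_k$ equals $\frac1n\sum_j x_{j,k}=\Bar{x}_k$; similarly the stacked $y_{i,k}-\Bar{y}_k$ equals $(I_{np}-\*M)\*y_k$. In particular $\|x_{i,k}-\Bar{x}_k\|\le\|(I_{np}-\*M)\*x_k\|$ and $\|y_{i,k}-\Bar{y}_k\|\le\|(I_{np}-\*M)\*y_k\|$ for every $i$, so it suffices to bound these two projected norms.

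For the second inequality I would observe that $I_{np}-\*M$ is an orthogonal projection, hence non-expansive, so $\|(I_{np}-\*M)\*y_k\|\le\|\*y_k\|$, and then invoke Lemma~\ref{lemma:bounded}, which gives $\|\*y_k\|\le B_y$ for all $k\ge1$. This immediately yields $\|y_{i,k}-\Bar{y}_k\|\le B_y$.

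For the first inequality I would use $\*x_k=\*Z^t\*y_k$ from~\eqref{eq:near_dgd_x} together with the fact that double stochasticity of $\*W$ (Assumption~\ref{assum:symmetry}) gives $\*M\*Z^t=\*Z^t\*M=\*M$, so that $(I_{np}-\*M)\*x_k=(\*Z^t-\*M)\*y_k$. It then remains to compute the spectral norm of $\*Z^t-\*M=\bigl(\*W^t-\tfrac{1_n1_n'}{n}\bigr)\otimes I_p$. Writing the symmetric eigendecomposition $\*W=\sum_{i=1}^n\lambda_i(\*W)u_iu_i'$ with $u_n=1_n/\sqrt n$ and $\lambda_n(\*W)=1$, we get $\*W^t-\tfrac{1_n1_n'}{n}=\sum_{i=1}^{n-1}\lambda_i^t(\*W)u_iu_i'$, whose spectral norm is $\max_{1\le i\le n-1}|\lambda_i(\*W)|^t=\beta^t$, where the last equality uses part (iv) of Assumption~\ref{assum:symmetry}: all eigenvalues of $\*W$ lie in $(0,1]$, so $\beta=\lambda_{n-1}(\*W)$ is the largest among the non-trivial ones in absolute value. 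Since $\|A\otimes I_p\|=\|A\|$, this gives $\|\*Z^t-\*M\|=\beta^t$, and therefore $\|(I_{np}-\*M)\*x_k\|\le\beta^t\|\*y_k\|\le\beta^tB_y$ for $k\ge1$, completing the argument.

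The computation is short, and I do not expect a genuine obstacle; the only point needing care is the identity $\|\*Z^t-\*M\|=\beta^t$, which rests squarely on positive-definiteness of $\*W$. Without it, $|\lambda_1(\*W)|$ could dominate $\beta$ and the contraction factor — and hence the constant $\beta^t$ in the statement — would have to be replaced accordingly. Everything else is bookkeeping with the projection $I_{np}-\*M$, the commutation $\*M\*Z^t=\*M$, and the boundedness constant $B_y$ supplied by Lemma~\ref{lemma:bounded}.
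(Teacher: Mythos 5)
Your proof is correct and follows essentially the same route as the paper: reduce the blockwise deviations to $\|(I_{np}-\*M)\*x_k\|$ and $\|(I_{np}-\*M)\*y_k\|$, use $\*x_k=\*Z^t\*y_k$ with $\*M\*Z^t=\*M$ (equivalently $\bar{x}_k=\bar{y}_k$), bound the consensus error by $\beta^t\|\*y_k\|$ via the spectrum of $\*W$, and finish with the bound $\|\*y_k\|\le B_y$ from Lemma~\ref{lemma:bounded}. Your explicit eigendecomposition of $\*W^t-\tfrac{1_n1_n'}{n}$ and the remark that positive-definiteness is what lets $\beta^t$ (rather than $\max_i|\lambda_i|^t$ over nontrivial eigenvalues) serve as the contraction factor simply spell out the "spectral properties" the paper invokes in one line.
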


\begin{proof}
Multiplying both sides of~\eqref{eq:near_dgd_x} with $\*M=\left(\frac{1_n 1_n'}{n} \otimes I_p\right)$ yields $\Bar{x}_k = \Bar{y}_k$. Moreover, we observe that $ \left\|\*v_k- \*M \*v_k \right\|^2 = \sum_{i=1}^n \left\|v_{i,k}-\Bar{v}_k \right\|^2$ for any vector $\*v \in \mathbb{R}^{np}$. Hence,
\begin{equation*}
    \begin{split}
        \left\|x_{i,k}-\Bar{x}_k \right\| &= \left\|x_{i,k}-\Bar{y}_k \right\|
        \leq \left\|\*x_k- \*M \*y_k \right\|\\
        &\leq  \left\|\*Z^t\*y_k- \*M \*y_k \right\|\leq \beta^t \|\*y_k\|,
    \end{split}
\end{equation*}
where we derive the last inequality from the spectral properties of $\*Z$ and $\*M$ (we note that the matrix $1_n 1_n'/n$ has a single non-zero eigenvalue at $1$ associated with the eigenvector $1_{np}$). 

Similarly, for the local iterates $y_{i,k}$ we obtain,
\begin{equation*}
    \begin{split}
        \left\|y_{i,k}-\Bar{y}_k \right\| \leq \left\|\*y_k- \*M \*y_k \right\|=  \left\|\left(I-\*M\right)\*y_k \right\|\leq  \|\*y_k\|.
    \end{split}
\end{equation*}
Applying Lemma~\ref{lemma:bounded} to the two preceding inequalities completes the proof.
\end{proof}
We are now ready to state the first Theorem of this section, namely that there exists a subsequence of $\{\*y_k\}$ that converges to a critical point of $\mL_t$. 
\begin{thm}\textbf{(Subsequence convergence)}
\label{theorem:limit_pts}
Let $\{\*y_k\}$ be the sequence of NEAR-DGD$^t$ iterates generated by~\eqref{eq:near_dgd_y} with steplength $\alpha < 2/L$. Then $\{\*y_k\}$ has a convergent subsequence whose limit point is a critical point of~\eqref{eq:lyapunov_f}.
\end{thm}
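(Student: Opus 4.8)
The plan is to combine the three preceding lemmas with a standard summability\nobreakdash-plus\nobreakdash-continuity argument, without invoking the KL property yet.

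\textbf{Step 1 (the iterate gap vanishes).} By Lemma~\ref{lemma:bounded}(i) the sequence $\{\mL_t(\*y_k)\}$ is bounded below, and by Lemma~\ref{lem:suff_desc} it is non-increasing; hence it converges to some finite limit $\mL_t^\star$, and in particular $\mL_t(\*y_k) - \mL_t(\*y_{k+1}) \to 0$. Rearranging the sufficient-descent inequality of Lemma~\ref{lem:suff_desc} and summing over $k = 0,\dots,K$ telescopes the right-hand side, yielding $\rho \sum_{k=0}^{K} \|\*y_{k+1} - \*y_k\|^2 \leq \mL_t(\*y_0) - \mL_t(\*y_{K+1}) \leq \mL_t(\*y_0) - \mL_t^\star$. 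Since $\rho > 0$, letting $K \to \infty$ gives $\sum_{k \geq 0} \|\*y_{k+1} - \*y_k\|^2 < \infty$, so $\|\*y_{k+1} - \*y_k\| \to 0$.

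\textbf{Step 2 (the gradient vanishes along the sequence).} From~\eqref{lem:grad_x} we have $\alpha \nabla \mL_t(\*y_k) = \*x_k - \*x_{k+1} = \*Z^t(\*y_k - \*y_{k+1})$, where the second equality uses $\*x_k = \*Z^t \*y_k$ from~\eqref{eq:near_dgd_x}. By the non-expansiveness of $\*Z^t$ (a consequence of the double stochasticity and positive-definiteness in Assumption~\ref{assum:symmetry}), $\|\nabla \mL_t(\*y_k)\| \leq \alpha^{-1} \|\*y_k - \*y_{k+1}\| \to 0$ by Step 1.

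\textbf{Step 3 (extracting a critical limit point).} By Lemma~\ref{lemma:bounded}(ii) the sequence $\{\*y_k\}$ is bounded, so the Bolzano--Weierstrass theorem provides a subsequence $\{\*y_{k_j}\}$ converging to some $\*y^\star \in \mathbb{R}^{np}$. The map $\*y \mapsto \nabla \mL_t(\*y) = \*Z^t \nabla \*f(\*Z^t \*y) + \alpha^{-1}(\*Z^t - \*Z^{2t})\*y$ is continuous, since $\nabla \*f$ is continuous by Assumption~\ref{assum:smoothness} and the remaining terms are linear in $\*y$. Passing to the limit along the subsequence and using Step 2 gives $\nabla \mL_t(\*y^\star) = \lim_{j \to \infty} \nabla \mL_t(\*y_{k_j}) = 0$, so $\*y^\star$ is a critical point of $\mL_t$, which is the claim.

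The argument is essentially routine once the three lemmas are available; the one point requiring care is the identity in Step 2 that rewrites $\nabla \mL_t(\*y_k)$ exactly as $\alpha^{-1} \*Z^t(\*y_k - \*y_{k+1})$, since this is precisely what lets the summable-gap estimate of Step 1 be transferred into a statement about stationarity. I expect no real obstacle here; the genuinely harder follow-up — upgrading subsequential convergence to convergence of the full sequence — is where Assumption~\ref{assum:kl} will be needed.
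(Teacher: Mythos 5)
Your proposal is correct and follows essentially the same route as the paper's proof: telescoping the sufficient-descent inequality to get summability of $\|\*y_{k+1}-\*y_k\|^2$, transferring this to $\|\nabla\mL_t(\*y_k)\|\to 0$ via~\eqref{lem:grad_x} and the non-expansiveness of $\*Z^t$, and extracting a convergent subsequence from the boundedness in Lemma~\ref{lemma:bounded}. Your Step~3 merely makes explicit the continuity of $\nabla\mL_t$ that the paper uses implicitly when passing to the limit, so there is nothing to add.
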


\begin{proof}
By Lemma~\ref{lemma:bounded}, the sequence $\{\*y_k\}$ is bounded and therefore there exists a convergent subsequence $\{\*y_{k_s}\}_{s \in \mathbb{N}} \rightarrow \*y^\infty$ as $s \rightarrow \infty$. 
In addition, recursive application of Lemma~\ref{lem:suff_desc} over iterations $0,1,...,k$ yields,
\begin{equation*}
   \mathcal{L}_t\left(\*y_{k}\right) \leq \mathcal{L}_t\left(\*y_0\right) - \rho\sum_{j=0}^{k-1} \left \| \*y_{j+1}-\*y_{j}\right\|^2,
\end{equation*}
where the sequence $\{\mathcal{L}_t\left(\*y_{k}\right)\}$ is non-increasing and bounded from below by Lemmas~\ref{lem:suff_desc} and~\ref{lemma:bounded}.

Hence, $\{\mathcal{L}_k\left(\*y_{k}\right)\}$ converges and the above relation implies that $\sum_{k=1}^{\infty} \left \| \*y_{k+1}-\*y_{k}\right\|^2 < +\infty$ and $ \left \| \*y_{k+1}-\*y_{k}\right\| \rightarrow 0$. Moreover, $ \left \| \*x_{k+1}-\*x_{k}\right\|=\left \| \*y_{k+1}-\*y_{k}\right\|_{\*Z^{2t}} \leq  \left \| \*y_{k+1}-\*y_{k}\right\|$ by the non-expansiveness of $\*Z$ and thus $\|\*x_{k+1}-\*x_k\| \rightarrow 0$. Finally, Eq.~\ref{lem:grad_x} yields $\|\nabla\mathcal{L}_t\left(\*y_k\right)\|=\alpha^{-1}\|\*x_{k+1}-\*x_k\|\rightarrow0$. We conclude that $\left\|\nabla\mathcal{L}_t\left(\*y_{k_s}\right)\right\| \rightarrow0$ as $s\rightarrow \infty$ and therefore $\nabla\mathcal{L}_t\left(\*y^\infty\right)=\mathbf{0}$.
\end{proof}
We note that Assumption~\ref{assum:kl} is not necessary for Theorem~\ref{theorem:limit_pts} to hold. However, Theorem~\ref{theorem:limit_pts} does not guarantee the convergence of NEAR-DGD$^t$; we will need Assumption~\ref{assum:kl}  to prove that NEAR-DGD$^t$ converges in Theorem~\ref{thm:global_conv}. Before that, we introduce the following two preliminary Lemmas that hold only under Assumption~\ref{assum:kl}.

\begin{lem}\textbf{(Bounded difference under the KL property)}
\label{lemma:bounded_diff}
Let $\*x_k$ and $\*y_k$ be the NEAR-DGD$^t$ iterates generated by~\eqref{eq:near_dgd_x} and~\eqref{eq:near_dgd_y}, respectively, under steplength $\alpha < 2/L$.
Moreover, suppose that the KL inequality with respect to some point $\*y^\star \in \mathbb{R}^{np}$ holds at $\*y_k$, i.e.,
 \begin{equation}
 \label{eq:kl_w_grad}
     \phi^\prime(\mL_t(\*y_k) - \mL_t(\*y^\star))\|\mathcal{L}_k(\*y_k)\| \geq 1.
 \end{equation}
Then the following relation holds,
\begin{equation*}
    \begin{split}
         \left\|\*v_{k+1}-\*v_k\right\| \leq \frac{1}{\alpha \rho}\left( \phi\left(l_k\right) - \phi\left(l_{k+1}\right)\right),
    \end{split}
\end{equation*}
where $\|\*v_{k+1}-\*v_k\|$ can be $\|\*x_{k+1}-\*x_k\|$ or $\|\*y_{k+1}-\*y_k\|$ and $l_k:=\mL_t(\*y_k) - \mL_t(\*y^\star)$.
\end{lem}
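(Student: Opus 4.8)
The plan is to combine three ingredients already at our disposal: (a) the Sufficient Descent inequality of Lemma~\ref{lem:suff_desc}, which gives $\rho\|\*y_{k+1}-\*y_k\|^2 \leq l_k - l_{k+1}$; (b) the KL inequality~\eqref{eq:kl_w_grad} at $\*y_k$; and (c) the gradient identity~\eqref{lem:grad_x}, namely $\nabla\mL_t(\*y_k) = \alpha^{-1}(\*x_{k+1}-\*x_k)$, together with the non-expansiveness $\|\*x_{k+1}-\*x_k\| \leq \|\*y_{k+1}-\*y_k\|$. First I would use concavity of $\phi$ to pass from the decrement of $\mL_t$ to a decrement of $\phi(l_k)$: since $\phi$ is concave and $\mathcal{C}^1$ on $(0,\eta)$ with $\phi' > 0$, we have $\phi(l_k) - \phi(l_{k+1}) \geq \phi'(l_k)(l_k - l_{k+1})$.

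Next I would chain the pieces. From Lemma~\ref{lem:suff_desc}, $l_k - l_{k+1} \geq \rho\|\*y_{k+1}-\*y_k\|^2$, so
\begin{equation*}
   \phi(l_k) - \phi(l_{k+1}) \geq \phi'(l_k)\,\rho\,\|\*y_{k+1}-\*y_k\|^2 .
\end{equation*}
By the KL inequality~\eqref{eq:kl_w_grad} and the gradient identity, $\phi'(l_k) \geq \|\nabla\mL_t(\*y_k)\|^{-1} = \alpha\,\|\*x_{k+1}-\*x_k\|^{-1} \geq \alpha\,\|\*y_{k+1}-\*y_k\|^{-1}$, where the last step uses $\|\*x_{k+1}-\*x_k\| \leq \|\*y_{k+1}-\*y_k\|$. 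Substituting,
\begin{equation*}
   \phi(l_k) - \phi(l_{k+1}) \geq \alpha\rho\,\frac{\|\*y_{k+1}-\*y_k\|^2}{\|\*y_{k+1}-\*y_k\|} = \alpha\rho\,\|\*y_{k+1}-\*y_k\|,
\end{equation*}
which rearranges to $\|\*y_{k+1}-\*y_k\| \leq (\alpha\rho)^{-1}(\phi(l_k) - \phi(l_{k+1}))$. The bound for $\|\*x_{k+1}-\*x_k\|$ then follows immediately from the same non-expansiveness estimate $\|\*x_{k+1}-\*x_k\| \leq \|\*y_{k+1}-\*y_k\|$.

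The main obstacle is a set of edge cases that need to be handled so the inequalities above are legitimate: one must ensure $l_k > 0$ (so that $\phi'(l_k)$ is defined and positive) and that $\|\*y_{k+1}-\*y_k\| \neq 0$ before dividing by it. If $\|\*y_{k+1}-\*y_k\| = 0$ the claimed bound holds trivially since its right-hand side is nonnegative (by monotonicity of $\phi$ and of $\{l_k\}$); and if $l_k = 0$, sufficient descent forces $l_{k+1} \le l_k$, but $l_{k+1}\ge 0$ would need separate justification — in practice one invokes that $\mL_t(\*y^\star)$ is the relevant limiting value so $l_k \ge 0$ along the tail, or one simply notes that in the degenerate case $\nabla\mL_t(\*y_k)=0$, which by~\eqref{lem:grad_x} gives $\*y_{k+1}=\*y_k$ and reduces to the trivial case. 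Aside from these technical caveats, the argument is the standard KL "finite-length" estimate specialized to the descent constant $\rho$ and the gradient identity peculiar to NEAR-DGD$^t$.
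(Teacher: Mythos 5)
Your proposal is correct and follows essentially the same route as the paper: sufficient descent from Lemma~\ref{lem:suff_desc}, concavity of $\phi$ to convert the decrement of $\mL_t$ into a decrement of $\phi(l_k)$, the KL inequality combined with the identity $\nabla\mL_t(\*y_k)=\alpha^{-1}(\*x_{k+1}-\*x_k)$ to lower-bound $\phi'(l_k)$, and non-expansiveness of $\*Z$ to pass between the $\*x$- and $\*y$-differences. The only difference is cosmetic (you apply concavity before multiplying by $\phi'(l_k)$, the paper does it in the reverse order), and your extra remarks on the degenerate cases are harmless.
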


\begin{proof}
Lemma~\ref{lem:suff_desc} yields $ \rho \left \| \*y_{k+1}-\*y_k\right\|^2 \leq \mathcal{L}_t\left(\*y_k\right) -\mathcal{L}_t\left(\*y_{k+1}\right) = l_k - l_{k+1}$ for $k\geq0$.
We can multiply both sides of this relation with $\phi^\prime \left(l_{k}\right)>0$ to obtain $\rho\phi^\prime \left(l_{k}\right) \left \| \*y_{k+1}-\*y_k\right\|^2 \leq -\phi^\prime \left(l_{k}\right) \left(l_{k+1} - l_{k}\right)\leq \phi\left(l_k\right) - \phi\left(l_{k+1}\right),$
where we derive the last inequality from the concavity of $\phi$. In addition, using Eq.~\ref{lem:grad_x}, we can re-write~\eqref{eq:kl_w_grad} as $\alpha^{-1}\phi^\prime(l_{k})\|\*x_{k+1}-\*x_k\| \geq 1$. Combining these relations, we acquire,
\begin{equation*}
    \begin{split}
        \frac{\alpha\rho \left \| \*y_{k+1}-\*y_k\right\|^2 }{\left\| \*x_{k+1}-\*x_k\right\|} \leq   \phi\left(l_k\right) - \phi\left(l_{k+1}\right).
    \end{split}
\end{equation*}
Observing that $\|\*x_{k+1}-\*x_k\| \leq \|\*y_{k+1}-\*y_k\| $ due to the non-expansiveness of $\*Z$ and re-arranging the terms of the relation above yields the final result.
\end{proof}

In the next Lemma, we show that if NEAR-DGD$^t$ is initialized from an appropriate subset of $\mathbb{R}^{np}$ and Assumption~\ref{assum:kl} holds, then the sequence $\{\*y_k\}$ converges to a critical point of the Lyapunov function $\mL_t$.


\begin{lem}\textbf{(Local convergence)}
\label{lemma:local_conv}
Let $\{\*y_k\}$ be the sequence of iterates generated by~\eqref{eq:near_dgd_y} from initial point $\*y_0$ and with steplength $\alpha < 2/L$. 
Moreover, let $U$ and $\eta$ be the objects in Def.~\ref{def:kl_prop} and suppose that the following relations are satisfied for some point $\*y^\star \in \mathbb{R}^{np}$,
\begin{gather}
(\alpha \rho)^{-1}\phi\left(\mL_t(\*y_0)-\mL_t(\*y^\star)\right)+\| \*y_{0}- \*y^\star\| < r, \label{eq:loc_conv_U} \\
 \mL_t(\*y^\star) \leq \mL_t(\*y_k) < \mL_t(\*y^\star) + \eta, \quad k\geq 0 \label{eq:loc_conv_f},
\end{gather}
where $r$ is a positive constant and $\mathcal{B}(\*y^\star,r) \subset U$.

Then the sequence $\{\*y_k\}$ has finite length, i.e. $\sum_{j=0}^\infty \|\*y_{j+1}-\*y_j\| < \infty$, and converges to a critical point of~\eqref{eq:lyapunov_f}. 
\end{lem}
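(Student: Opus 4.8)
The plan is to run the standard Kurdyka--\L ojasiewicz descent argument with Lemma~\ref{lemma:bounded_diff} as the workhorse. First I would show by induction on $k$ that every iterate $\*y_k$ stays in the ball $\mathcal{B}(\*y^\star,r)\subset U$, so that the KL inequality --- and hence hypothesis~\eqref{eq:kl_w_grad} of Lemma~\ref{lemma:bounded_diff} --- is available at each $\*y_k$. The base case $\*y_0\in\mathcal{B}(\*y^\star,r)$ follows from~\eqref{eq:loc_conv_U} because the term $(\alpha\rho)^{-1}\phi(\mL_t(\*y_0)-\mL_t(\*y^\star))$ is nonnegative. For the inductive step, assuming $\*y_0,\dots,\*y_k\in U$, the KL inequality holds at each of them (using that~\eqref{eq:loc_conv_f} places every value $\mL_t(\*y_j)$ in the range where Def.~\ref{def:kl_prop} applies), so Lemma~\ref{lemma:bounded_diff} gives $\|\*y_{j+1}-\*y_j\|\le(\alpha\rho)^{-1}(\phi(l_j)-\phi(l_{j+1}))$ for $j=0,\dots,k$. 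Telescoping and using $\phi\ge0$ yields $\sum_{j=0}^{k}\|\*y_{j+1}-\*y_j\|\le(\alpha\rho)^{-1}\phi(l_0)$, and then the triangle inequality together with~\eqref{eq:loc_conv_U} gives $\|\*y_{k+1}-\*y^\star\|\le\|\*y_0-\*y^\star\|+(\alpha\rho)^{-1}\phi(l_0)<r$, which closes the induction.

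Once the induction is complete, the telescoped bound holds for every $k$, hence $\sum_{j=0}^\infty\|\*y_{j+1}-\*y_j\|\le(\alpha\rho)^{-1}\phi(l_0)<\infty$, i.e.\ $\{\*y_k\}$ has finite length. In particular $\{\*y_k\}$ is Cauchy and converges to some $\*y^\infty\in\mathcal{B}(\*y^\star,r)$. It then remains to identify $\*y^\infty$ as a critical point of~\eqref{eq:lyapunov_f}, which proceeds exactly as in the last part of the proof of Theorem~\ref{theorem:limit_pts}: finite length forces $\|\*y_{k+1}-\*y_k\|\to0$, hence $\|\*x_{k+1}-\*x_k\|=\|\*y_{k+1}-\*y_k\|_{\*Z^{2t}}\to0$ by non-expansiveness of $\*Z$, and then~\eqref{lem:grad_x} gives $\|\nabla\mL_t(\*y_k)\|=\alpha^{-1}\|\*x_{k+1}-\*x_k\|\to0$; continuity of $\nabla\mL_t$ (Assumption~\ref{assum:smoothness} plus smoothness of the quadratic terms in~\eqref{eq:lyapunov_f}) then yields $\nabla\mL_t(\*y^\infty)=\mathbf{0}$.

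The step I expect to require the most care is the degenerate possibility that $\mL_t(\*y_k)=\mL_t(\*y^\star)$ for some finite $k$, in which case the KL inequality, which needs the strict inequality $\mL_t(\*y^\star)<\mL_t(\*y_k)$, does not apply directly. This branch is benign: by the monotonicity in Lemma~\ref{lem:suff_desc} we then have $\mL_t(\*y_{k'})=\mL_t(\*y^\star)$ for all $k'\ge k$, and $\rho\|\*y_{k'+1}-\*y_{k'}\|^2\le\mL_t(\*y_{k'})-\mL_t(\*y_{k'+1})=0$ forces the tail of the sequence to be constant, so finite length and convergence hold trivially and $\nabla\mL_t$ vanishes at the limit via~\eqref{lem:grad_x}. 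I would handle this case separately (equivalently, stop the KL estimate as soon as the descent inequality of Lemma~\ref{lem:suff_desc} becomes an equality), which keeps the main induction clean and avoids any division-by-zero issue in the bound of Lemma~\ref{lemma:bounded_diff}.
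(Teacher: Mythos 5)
Your proposal is correct and follows essentially the same route as the paper: an induction (via Lemma~\ref{lemma:bounded_diff} and a telescoping/triangle-inequality bound) keeping the iterates in $\mathcal{B}(\*y^\star,r)$ so the KL inequality remains applicable, yielding the finite-length bound $\sum_j\|\*y_{j+1}-\*y_j\|\le(\alpha\rho)^{-1}\phi(l_0)$, hence a Cauchy sequence whose limit is a critical point because $\|\nabla\mL_t(\*y_k)\|=\alpha^{-1}\|\*x_{k+1}-\*x_k\|\to0$. Your separate treatment of the degenerate case $\mL_t(\*y_k)=\mL_t(\*y^\star)$ matches the paper's handling of that same trivial branch, so no further changes are needed.
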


\begin{proof}
In the trivial case where $\mL_t(\*y_k) = \mL_t(\*y^\star)$, Lemma~\ref{lem:suff_desc} combined with~\eqref{eq:loc_conv_f} yield $\mL_t(\*y_{k+1})=\mL_t(\*y_k)=\mL_t(\*y^\star)$ and $\|\*y_{k+1}-\*y_k\|=0$. Let us now assume that $\mL_t(\*y_k) \in \big(\mL_t(\*y^\star),\mL_t(\*y^\star)+\eta\big)$ and $\*y_k \in \mathcal{B}(\*y^\star,r)$ up to and including some index $\tau \in \mathbb{N}^+$, which implies that~\eqref{eq:kl_w_grad} holds for all $k\leq\tau$. Applying the triangle inequality twice, we obtain,
\begin{equation*}
\begin{split}
    \|\*y_{\tau+1} - \*y^\star\|
    &\leq \|\*y_{\tau+1} - \*y_{0} \|+\| \*y_{0}- \*y^\star\|\\
    &= \left \|\sum_{j=0}^{\tau}\left(\*y_{j+1}-\*y_j\right)\right\| +\| \*y_{0}- \*y^\star\|\\
    &\leq \sum_{j=0}^{\tau}\|\*y_{j+1}-\*y_j\| +\| \*y_{0}- \*y^\star\|.
    \end{split}
\end{equation*}
 Application of Lemma~\ref{lemma:bounded_diff} then yields  $\left \| \*y_{k+1}-\*y_k\right\| \leq (\alpha \rho)^{-1}\left(\phi\left(l_k\right) - \phi\left(l_{k+1}\right)\right)$, for $k \leq \tau$.
Substituting this in the preceding relation, we acquire,
\begin{equation*}
\begin{split}
    \|\*y_{\tau+1} - \*y^\star\|
    &\leq \frac{1}{\alpha \rho} \left(\phi\left(l_{0}\right)-\phi\left(l_{\tau+1}\right)\right)+\| \*y_{0}- \*y^\star\|\\
    &\leq  \frac{\phi\left(l_{0}\right)}{\alpha \rho} +\| \*y_{0}- \*y^\star\| < r.
    \end{split}
\end{equation*}
The above result implies that $\*y_{\tau+1} \in \mathcal{B}(\*y^\star, r)$. Given the fact that that $\|\*y_0 - \*y^\star\| < r$ and thus $\*y_0 \in \mathcal{B}(\*y^\star,r)$, we have $\*y_{k} \in \mathcal{B}(\*y^\star, r)$ and $\left \| \*y_{k+1}-\*y_k\right\| \leq (\alpha \rho)^{-1} \left(\phi\left(l_k\right) - \phi\left(l_{k+1}\right)\right)$ for all $k \geq 0$. Hence,
\begin{equation*}
\begin{split}
    \sum_{j=0}^\infty \|\*y_{j+1} - \*y_j\| &\leq \frac{1}{\alpha \rho} \sum_{j=0}^\infty \left(\phi(l_k)-\phi(l_{k+1})\right)\\
    &\leq \frac{1}{\alpha \rho}\left(\phi(l_0)-\phi(l_\infty)\right)\leq \frac{\phi(l_0)}{\alpha \rho}.
    \end{split}
\end{equation*}
Thus, the sequence $\{\*y_k\}$ is finite and Cauchy (convergent), and $\{\*y_k\} \rightarrow \tilde{\*y}$, where $\tilde{\*y}$ is a critical point of~\eqref{eq:lyapunov_f} by Theorem~\ref{theorem:limit_pts}. 
\end{proof}
Next, we combine our previous results to prove the global convergence of the $\*y_k$ iterates of NEAR-DGD$^t$ in Theorem~\ref{thm:global_conv}.
\begin{thm}\textbf{(Global Convergence)}
\label{thm:global_conv} Let $\{\*y_k\}$ be the sequence of NEAR-DGD$^t$ iterates produced by~\eqref{eq:near_dgd_y} under steplength $\alpha < 2/L$ and let $\*y^\infty$ be a limit point of a convergent subsequence of $\{\*y_k\}$ as defined in Theorem~\ref{theorem:limit_pts}. 

Then under Assumption~\ref{assum:kl} the following statements hold: $i$) there exists an index $k_0 \in \mathbb{N}^+$ such that the KL inequality with respect to $\*y^\infty$ holds for all $k\geq k_0$, and $ii$) the sequence $\{\*y_k\}$ converges to $\*y^\infty$.
\end{thm}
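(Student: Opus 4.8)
The plan is to bootstrap Lemma~\ref{lemma:local_conv}: once a tail of the iterates has entered a KL neighborhood of $\*y^\infty$ with function values close enough to $\mL_t(\*y^\infty)$, that lemma applies verbatim to the shifted sequence $\{\*y_{k_0+j}\}_{j\ge 0}$ and forces convergence to $\*y^\infty$. I would first record that the value sequence converges to $\mL_t(\*y^\infty)$: by Lemma~\ref{lem:suff_desc} the sequence $\{\mL_t(\*y_k)\}$ is non-increasing, by Lemma~\ref{lemma:bounded} it is bounded below, hence it converges; since $\mL_t$ is continuous and $\*y_{k_s}\to\*y^\infty$, its limit equals $\mL_t(\*y^\infty)$, and monotonicity gives $\mL_t(\*y_k)\ge \mL_t(\*y^\infty)$ for every $k$. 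If $\mL_t(\*y_{k'})=\mL_t(\*y^\infty)$ for some $k'$, then monotonicity and Lemma~\ref{lem:suff_desc} force $\*y_{k+1}=\*y_k$ for all $k\ge k'$, so the sequence is eventually constant and equal to $\*y^\infty$ and both claims are immediate; hence I may assume $\mL_t(\*y_k)>\mL_t(\*y^\infty)$ for all $k$.

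Since $\*y^\infty$ is a critical point of $\mL_t$ by Theorem~\ref{theorem:limit_pts}, Assumption~\ref{assum:kl} supplies $\eta\in(0,+\infty]$, a neighborhood $U$ of $\*y^\infty$, and the concave function $\phi$ of Definition~\ref{def:kl_prop}; fix $r>0$ with $\mathcal{B}(\*y^\infty,r)\subset U$. Using $\*y_{k_s}\to\*y^\infty$, the convergence $\mL_t(\*y_k)\to\mL_t(\*y^\infty)$, and the continuity of $\phi$ at $0$ with $\phi(0)=0$, I would pick $s$ large and set $k_0:=k_s$ so that
\begin{equation*}
(\alpha\rho)^{-1}\phi\big(\mL_t(\*y_{k_0})-\mL_t(\*y^\infty)\big)+\|\*y_{k_0}-\*y^\infty\|<r
\qquad\text{and}\qquad
\mL_t(\*y_{k_0})<\mL_t(\*y^\infty)+\eta .
\end{equation*}
The first inequality is achievable because both summands tend to $0$ as $s\to\infty$, and the second because $\mL_t(\*y_k)\downarrow\mL_t(\*y^\infty)$; moreover, by monotonicity $\mL_t(\*y^\infty)\le\mL_t(\*y_k)\le\mL_t(\*y_{k_0})<\mL_t(\*y^\infty)+\eta$ for all $k\ge k_0$.

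These are exactly hypotheses~\eqref{eq:loc_conv_U}--\eqref{eq:loc_conv_f} of Lemma~\ref{lemma:local_conv} applied to $\{\*y_{k_0+j}\}_{j\ge 0}$ with $\*y^\star=\*y^\infty$. Therefore $\sum_{j\ge k_0}\|\*y_{j+1}-\*y_j\|<\infty$, so $\{\*y_k\}$ is Cauchy and converges to a critical point of~\eqref{eq:lyapunov_f}; since the subsequence $\{\*y_{k_s}\}$ converges to $\*y^\infty$, the full sequence converges to $\*y^\infty$, giving claim~(ii). For claim~(i), the proof of Lemma~\ref{lemma:local_conv} shows $\*y_k\in\mathcal{B}(\*y^\infty,r)\subset U$ for all $k\ge k_0$; combined with $\mL_t(\*y^\infty)<\mL_t(\*y_k)<\mL_t(\*y^\infty)+\eta$ this means $\*y_k$ lies in $U\cap\{\*y:\mL_t(\*y^\infty)<\mL_t(\*y)<\mL_t(\*y^\infty)+\eta\}$ for every $k\ge k_0$, where the KL inequality with respect to $\*y^\infty$ holds.

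The main obstacle is the joint smallness required at the restart index: $\|\*y_{k_0}-\*y^\infty\|$ can be made small only along the convergent subsequence, whereas $\phi\big(\mL_t(\*y_{k_0})-\mL_t(\*y^\infty)\big)$ can be made small along the whole sequence thanks to monotone value convergence; choosing $k_0$ to be a sufficiently late index of the subsequence reconciles both, with the continuity of $\phi$ at $0$ converting the value gap into the required bound. Everything else is a direct invocation of the sufficient-descent and local-convergence lemmas already proved.
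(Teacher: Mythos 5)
Your proposal is correct and follows essentially the same route as the paper's proof: establish monotone convergence of $\{\mathcal{L}_t(\*y_k)\}$ to $\mathcal{L}_t(\*y^\infty)$, use continuity of $\phi$ and the convergent subsequence to pick $k_0$ satisfying hypotheses~\eqref{eq:loc_conv_U}--\eqref{eq:loc_conv_f} with $\*y^\star=\*y^\infty$, and then invoke Lemma~\ref{lemma:local_conv} on the tail to get convergence of the full sequence to $\*y^\infty$. Your write-up is merely more explicit than the paper's (handling the trivial equal-value case separately and spelling out why the iterates stay in the KL region for claim~(i)), but there is no substantive difference in approach.
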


\begin{proof}
We first observe that by Lemma~\ref{lem:suff_desc} the sequence $\{\mL_t(\*y_k)\}$ is non-increasing, and therefore $\mL_t(\*y^\infty) \leq \mL_t(\*y_k)$ for all $k \geq 0$. If Assumption~\ref{assum:kl} holds, then the objects $U$ and $\eta$ in Def.~\ref{def:kl_prop} exist and by the continuity of $\phi$, it is possible to find an index $k_0$ that satisfies the following relations,
\begin{gather*}
    (\alpha \rho)^{-1} \phi\left(\mL_t(\*y_{k_0}) - \mL_t(\*y^\infty)\right) + \|\*y_{k_0} - \*y^\infty\| < r, \\
    \mL_t(\*y_k) \in [\mL_t(\*y^\infty), \mL_t(\*y^\infty)+\eta), \quad \forall k \geq 0,
\end{gather*}
where $\mathcal{B}(\*y^\infty,r) \subset U$.

Applying Lemma~\ref{lemma:local_conv} to the sequence $\{\*y_k\}_{k\geq k_0}$ with $\*y^\star = \*y^\infty$ establishes the convergence of $\{\*y_k\}$. Finally, since $\*y^\infty$ is the limit point of a subsequence of $\{\*y_k\}$ and $\{\*y_k\}$ is convergent, we conclude that $\{\*y_k\}\rightarrow \*y^\infty$.
\end{proof}
Since $\*Z$ is a non-singular matrix, Theorem~\ref{thm:global_conv} implies that the sequence $\{\*x_k\}$ also converges. Moreover, using arguments similar to~\cite{kl_rates}, we can prove the following result on the convergence rate of $\{\*x_k\}$.
\begin{lem}\textbf{(Rates)}
\label{lem:rates}
Let $\{\*x_k\}$ be the sequence of iterates produced by~\eqref{eq:near_dgd_x}, $\*x^\infty = \*Z^t \*y^\infty$ where $\*y^\infty$ is the limit point of the sequence $\{\*y_k\}$ and suppose $\phi(s)=c s^{1-\theta}$ in Assumption~\ref{assum:kl} for some constant $c>0$ and $\theta \in [0,1)$ (for a discussion on $\phi$, we direct readers to~\cite{attouch_proximal_2013}). Then the following hold:
\begin{enumerate}
    \item If $\theta=0$, $\{\*x_k\}$ converges in a finite number of iterations.
    \item If $\theta \in (0,1/2]$, then constants $c>0$  and $Q \in [0,1)$ exist such that $\|\*x_k-\*x^\infty\|\leq cQ^k$.
    \item If $\theta \in (1/2,1)$, then there exists a constant $c>0$ such that $\|\*x_k-\*x^\infty\|\leq c k^{-\frac{1-\theta}{2\theta-1}}$.
\end{enumerate}
\end{lem}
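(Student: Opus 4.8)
The plan is to piggyback on the finite-length estimate from Lemma~\ref{lemma:local_conv} and Theorem~\ref{thm:global_conv}, converting the telescoping bound on $\sum_k \|\*y_{k+1}-\*y_k\|$ into a rate on the tail sum $S_k := \sum_{j\geq k}\|\*y_{j+1}-\*y_j\|$, which dominates $\|\*y_k-\*y^\infty\|$ by the triangle inequality. First I would fix $k_0$ as in Theorem~\ref{thm:global_conv}, set $l_k := \mL_t(\*y_k)-\mL_t(\*y^\infty)\geq 0$, and recall two facts valid for all $k\geq k_0$: (a) the sufficient-descent bound $\rho\|\*y_{k+1}-\*y_k\|^2 \leq l_k - l_{k+1}$ from Lemma~\ref{lem:suff_desc}, and (b) the KL inequality $\phi'(l_k)\|\nabla\mL_t(\*y_k)\| \geq 1$, which via Eq.~\eqref{lem:grad_x} reads $\alpha^{-1}\phi'(l_k)\|\*x_{k+1}-\*x_k\|\geq 1$, hence (using non-expansiveness of $\*Z$) $\alpha^{-1}\phi'(l_k)\|\*y_{k+1}-\*y_k\| \geq 1$, i.e. $\|\*y_{k+1}-\*y_k\| \geq \alpha/\phi'(l_k)$. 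With $\phi(s)=cs^{1-\theta}$ we have $\phi'(s) = c(1-\theta)s^{-\theta}$, so this last bound becomes $\|\*y_{k+1}-\*y_k\|^{1/\theta} \gtrsim l_k$ (for $\theta>0$), which feeds back into (a).

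The case $\theta=0$ is immediate: then $\phi'(s)=c$ is constant, and (b) gives $\|\*y_{k+1}-\*y_k\|\geq \alpha/c$; but $\|\*y_{k+1}-\*y_k\|\to 0$ by Theorem~\ref{theorem:limit_pts}, a contradiction unless $\|\*y_{k+1}-\*y_k\|=0$ for all large $k$, i.e. finite termination; then $\*x_k=\*Z^t\*y_k$ is eventually constant too. For $\theta\in(0,1)$, I would combine (a) and the KL consequence to control the tail sum $S_k$. The standard argument (as in~\cite{kl_rates}) derives a recursion for $S_k$: using concavity of $\phi$ and the descent inequality one shows $S_k \leq S_{k-1}-S_k + \tfrac{1}{\alpha\rho}$-type steps already carried out in Lemma~\ref{lemma:local_conv}, combined with the KL bound $l_k \lesssim (S_k - S_{k+1})^{\,?}$, to obtain either a geometric contraction $S_{k+1}\leq Q S_k$ when $\theta\in(0,1/2]$ (yielding $\|\*y_k-\*y^\infty\|\leq S_k \leq cQ^k$), or, when $\theta\in(1/2,1)$, an inequality of the form $S_k^{\,2\theta/(2\theta-1)} \leq c(S_{k-1}-S_k)$, whose solution is $S_k = O(k^{-(1-\theta)/(2\theta-1)})$ by a discrete Gr\"onwall / Chung-type lemma. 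Finally I would transfer the rate from $\{\*y_k\}$ to $\{\*x_k\}$ via $\|\*x_k-\*x^\infty\| = \|\*Z^t(\*y_k-\*y^\infty)\| \leq \|\*y_k-\*y^\infty\| \leq S_k$, absorbing all constants into a single $c$.

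The main obstacle is the bookkeeping in the $\theta\in(1/2,1)$ regime: one must carefully chain the two inequalities $\rho\|\*y_{k+1}-\*y_k\|^2\leq l_k-l_{k+1}$ and $l_k \leq \big(\tfrac{c(1-\theta)}{\alpha}\|\*y_{k+1}-\*y_k\|\big)^{1/\theta}$ so that the resulting recursion for the tail length $S_k$ has the right exponent, and then invoke the elementary lemma that $S_k \geq 0$, $S_k$ non-increasing, and $S_k^{\mu}\leq C(S_{k-1}-S_k)$ with $\mu=\tfrac{2\theta}{2\theta-1}>1$ imply $S_k = O(k^{-1/(\mu-1)}) = O(k^{-(1-\theta)/(2\theta-1)})$. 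Since all of the per-step ingredients (Lemma~\ref{lem:suff_desc}, the KL inequality with respect to $\*y^\infty$ for $k\geq k_0$ from Theorem~\ref{thm:global_conv}, Eq.~\eqref{lem:grad_x}, and non-expansiveness of $\*Z$) are already available, this reduces to the known abstract argument of~\cite{kl_rates}, and the write-up can cite it for the recursion-to-rate step rather than reproducing it in full.
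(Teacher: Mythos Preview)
Your approach is essentially the one in the paper: bound the tail sum $S_k$ by $\phi(l_k)/(\alpha\rho)$ via telescoping Lemma~\ref{lemma:bounded_diff}, combine with the KL inequality rewritten in terms of $S_k-S_{k+1}$, and read off the rate from the resulting recursion. Two small remarks. First, the paper defines $S_k=\sum_{j\geq k}\|\*x_{j+1}-\*x_j\|$ rather than the $\*y$-version; this is marginally cleaner because then $S_k-S_{k+1}=\|\*x_{k+1}-\*x_k\|=\alpha\|\nabla\mL_t(\*y_k)\|$ is an \emph{equality} (via Eq.~\eqref{lem:grad_x}), so no appeal to non-expansiveness of $\*Z$ is needed at that step, and $\|\*x_k-\*x^\infty\|\leq S_k$ holds directly without passing through $\*y$. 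Second, your stated recursion exponent in the $\theta\in(1/2,1)$ case is off: the two ingredients $S_k\lesssim l_k^{1-\theta}$ and $l_k^{\theta}\lesssim S_k-S_{k+1}$ combine to $S_k^{\,\theta/(1-\theta)}\lesssim S_k-S_{k+1}$, not $S_k^{\,2\theta/(2\theta-1)}$; it is with $\mu=\theta/(1-\theta)$ that $1/(\mu-1)=(1-\theta)/(2\theta-1)$ recovers the claimed rate. The paper carries out this last ``recursion-to-rate'' step explicitly via the integral comparison with $h(s)=s^{-\theta/(1-\theta)}$ rather than invoking an abstract discrete lemma.
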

\begin{proof}
$i$) $\theta=0$: From the definition of $\phi$ and $\theta=0$ we have $\phi'(l_k)=c(1-\theta)l_k^{-\theta}=c$. Let $I:=\{k \in \mathbb{N}:\*x_{k+1} \neq \*x_k\}$ (by the non-singularity of $\*Z$, it also follows that $\*y_{k+1}\neq \*y_k$ for $k\in I$). Then for large $k$
the KL inequality holds at $\*y_k$ and we obtain $\|\nabla \mL_t(\*y_k)\|\geq c^{-1}$, or equivalently by~\eqref{lem:grad_x}, $\|\*x_{k+1}-\*x_k\| \geq \alpha c^{-1} $. Application of Lemma~\ref{lem:suff_desc} combined with the fact that $\|\*x_{k+1}-\*x_k\|\leq \|\*y_{k+1}-\*y_k\|$ yields $\mL_t(\*y_{k+1})\leq\mathcal{L}_t\left(\*y_k\right) - \rho \alpha^2 c^{-2}$. Given the convergence of the sequence $\{\mL_t\}$, we conclude that the set $I$ is finite and the method converges in a finite number of steps.

$ii$) $\theta \in (0,1)$: Let $S_k :=\sum_{j=k}^\infty \|\*x_{j+1}-\*x_j\|$ where $\*x^\infty = \*Z^t \*y^\infty$. Since $\|\*x_k - \*x^\infty\| \leq S_k$, it suffices to bound $S_k$. Using Lemma~\ref{lemma:bounded_diff} with $\*y^\star = \*y^\infty$ and for $k\geq k_0$, where $k_0$ is defined in Theorem~\ref{thm:global_conv}, we obtain,
\begin{equation}
\begin{split}
\label{eq:kl_sum_1}
  S_k &\leq \frac{1}{\alpha \rho} \sum_{j=k}^\infty \left(\phi(l_j)-\phi(l_{j+1})\right) =\frac{1}{\alpha \rho} \phi(l_k) = \frac{1}{\nu} l_k^{1-\theta},
    \end{split}
\end{equation}
where $\nu = \alpha \rho / c$.

Moreover, Eq.~\ref{lem:grad_x} yields $\left\|\nabla\mathcal{L}_t\left(\*y_k\right)\right\| = \alpha^{-1} \left\|\*x_{k+1}-\*x_k\right\|= \alpha^{-1}\left(S_k -  S_{k+1}\right)$. Using this relation and the definition of $\phi$, we can express the KL inequality as,
\begin{equation}
\label{eq:kl_sum_2}
     \mu l_k^{-\theta}\left(S_k-S_{k+1}\right) \geq 1,
\end{equation}
where $\mu = \alpha^{-1}c(1-\theta)$.

If $\theta \in (0,1/2]$, raising both sides of the preceding  inequality to the power of $\gamma = \frac{1-\theta}{\theta} > 1$ and re-arranging the terms yields $\mu^\gamma \left(S_k-S_{k+1}\right)^\gamma \geq l_k^{1-\theta}$.
Due to the fact that $S_k - S_{k+1} = \alpha \|\nabla\mathcal{L}_t(\*y_k)\| \rightarrow 0$, there exists some index $k$ such that $S_k - S_{k+1} > \left(S_k - S_{k+1}\right)^\gamma$ and $\mu^\gamma \left( S_k-S_{k+1}\right)\geq l_k^{1-\theta}$. Combining this relation with~\eqref{eq:kl_sum_1}, we obtain $\nu S_k  \leq \mu^\gamma \left( S_k-S_{k+1}\right) \Leftrightarrow S_{k+1} \leq \left(1-\frac{\nu}{\mu^\gamma} \right)S_k.$

If $\theta \in (1/2,1)$, raising both sides of~\eqref{eq:kl_sum_1} to the power of $\theta/(1-\theta)>1$ yields $S_k^{\theta/(1-\theta)} \leq \nu^{-\theta/(1-\theta)} l_k^\theta$. After substituting this relation in~\eqref{eq:kl_sum_2} and re-arranging we obtain $1 \leq C \left(S_k-S_{k+1}\right)(S_k^{\theta/(1-\theta)})^{-1},$
where $C = \mu \nu^{-\theta/(1-\theta)}$. Define $h:(0,+\infty)\rightarrow \mathbb{R}$ to be $h(s)=s^{-\theta/(1-\theta)}$. The preceding relation then yields $1 \leq C (S_k - S_{k+1}) h(S_k) \leq C \int_{S_{k+1}}^{S_k}h(s)ds =  C \zeta^{-1} \left(S_k^{\zeta}-S_{k+1}^{\zeta}\right)$, where $\zeta=(1-2\theta)/(1-\theta) < 0$. After setting $\tilde{C}=-C^{-1}\zeta > 0$ and re-arranging, we obtain $\tilde{C}\leq S^\zeta_{k+1}-S^\zeta_k$. Summing this relation over iterations $k=k_0,...,t-1$ yields $ (t-k_0) \tilde{C} \leq S_{t}^\zeta - S_{k_0}^\zeta \Leftrightarrow S_t \leq \left((t-k_0) \tilde{C}  + S_{k_0}^\zeta\right)^{1/\zeta} \leq c t^{1/\zeta}$,
for some $c > 0$.
\end{proof}

We conclude this subsection with one more result on the distance to optimality of the local $x_{i,k}$ iterates of NEAR-DGD$^t$ and their average $\bar{x}_k=\frac{1}{n}\sum_{i=1}^n x_{i,k}$ as $k\rightarrow \infty$.

\begin{cor}\textbf{(Distance to optimality)}
\label{lemma:dist_optimality}
Suppose that $\{\*y_k\} \rightarrow \*y^\infty$ and let $\*x^\infty=\*Z^t \*y^\infty$. Moreover, let $\bar{x}^\infty = \bar{y}^\infty = \frac{1}{n}\sum_{i=1}^n x^\infty_i$. Then $\bar{x}^\infty$ is an approximate critical point of $f$,
\begin{equation*}
    \left\| \nabla f(\bar{x}^\infty) \right\| \leq \beta^t \sqrt{n} L B_y
\end{equation*}
where $B_y$ is a positive constant defined in Lemma~\ref{lemma:bounded}.
\end{cor}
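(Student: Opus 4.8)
The plan is to turn the hypothesis $\{\*y_k\}\to\*y^\infty$ into a stationarity identity relating $\*x^\infty$ and $\*y^\infty$, project that identity onto the consensus (mean) direction to extract a bound on the \emph{averaged} gradient, and then control the remaining gap using the Lipschitz assumption together with the consensus estimate already proved in Lemma~\ref{lemma:bounded_dist}.

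First I would pass to the limit in the iteration~\eqref{eq:near_dgd_y}. Since $\{\*y_k\}\to\*y^\infty$ and $\*x_k=\*Z^t\*y_k\to\*Z^t\*y^\infty=\*x^\infty$, and $\nabla\*f$ is continuous by Assumption~\ref{assum:smoothness}, the relation $\*y_{k+1}=\*x_k-\alpha\nabla\*f(\*x_k)$ yields in the limit $\*y^\infty=\*x^\infty-\alpha\nabla\*f(\*x^\infty)$, i.e.\ $\nabla\*f(\*x^\infty)=\alpha^{-1}(\*x^\infty-\*y^\infty)$ (equivalently, this is the condition $\nabla\mathcal{L}_t(\*y^\infty)=\*0$ of Theorem~\ref{thm:global_conv} after cancelling the nonsingular factor $\*Z^t$). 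Next I multiply this identity on the left by the averaging matrix $\*M=(1_n1_n'/n)\otimes I_p$. Because $\*x_k=\*Z^t\*y_k$ implies $\bar x_k=\bar y_k$ for all $k$ (as in the proof of Lemma~\ref{lemma:bounded_dist}), we get $\*M\*x^\infty=\*M\*y^\infty$, hence $\*M\nabla\*f(\*x^\infty)=\*0$; reading off an arbitrary block gives $\tfrac1n\sum_{i=1}^n\nabla f_i(x_i^\infty)=0$, i.e.\ $\sum_{i=1}^n\nabla f_i(x_i^\infty)=0$.

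Then, using $\nabla f(\bar x^\infty)=\sum_{i=1}^n\nabla f_i(\bar x^\infty)$ and subtracting the zero sum just obtained, I write $\nabla f(\bar x^\infty)=\sum_{i=1}^n\big(\nabla f_i(\bar x^\infty)-\nabla f_i(x_i^\infty)\big)$. Applying Cauchy--Schwarz to this sum of $n$ vectors gives $\|\nabla f(\bar x^\infty)\|\le\sqrt n\big(\sum_{i=1}^n\|\nabla f_i(\bar x^\infty)-\nabla f_i(x_i^\infty)\|^2\big)^{1/2}=\sqrt n\,\|\nabla\*f(\*M\*x^\infty)-\nabla\*f(\*x^\infty)\|$, since $\*M\*x^\infty$ is the vector with every block equal to $\bar x^\infty$. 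Assumption~\ref{assum:smoothness} bounds the last norm by $L\,\|\*M\*x^\infty-\*x^\infty\|$, and the consensus estimate from the proof of Lemma~\ref{lemma:bounded_dist}, namely $\|\*x_k-\*M\*x_k\|=\|\*x_k-\*M\*y_k\|\le\|\*Z^t\*y_k-\*M\*y_k\|\le\beta^t\|\*y_k\|\le\beta^t B_y$, gives $\|\*M\*x^\infty-\*x^\infty\|\le\beta^t B_y$ in the limit. Chaining these inequalities produces $\|\nabla f(\bar x^\infty)\|\le\beta^t\sqrt n\,L\,B_y$, as claimed.

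The main point requiring care is the projection step: deducing that the averaged gradient $\tfrac1n\sum_i\nabla f_i(x_i^\infty)$ vanishes. This rests entirely on the identity $\bar x^\infty=\bar y^\infty$, which encodes that consensus preserves the mean; without it the right-hand side of the projected identity would not collapse. A second, minor, subtlety is keeping the constant sharp ($\sqrt n$ rather than $n$): this is achieved by combining Cauchy--Schwarz with the \emph{aggregate} bound $\|\*x^\infty-\*M\*x^\infty\|\le\beta^t B_y$ instead of summing the $n$ per-node bounds $\|x_i^\infty-\bar x^\infty\|\le\beta^t B_y$ separately.
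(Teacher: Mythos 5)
Your proposal is correct and follows essentially the same route as the paper: it establishes $\*M\nabla\*f(\*x^\infty)=\*0$ from stationarity of the limit point (via the limit of the iteration, which is equivalent to $\nabla\mathcal{L}_t(\*y^\infty)=\*0$ after cancelling $\*Z^t$), and then combines the $L$-Lipschitz gradient assumption with the consensus bound $\|\*x^\infty-\*M\*y^\infty\|\leq\beta^t\|\*y^\infty\|\leq\beta^t B_y$. The only differences are cosmetic bookkeeping (Cauchy--Schwarz on the block sum versus the paper's manipulation with $\|\*M\nabla\*f(\*M\*y^\infty)\|=n^{-1/2}\|\nabla f(\bar y^\infty)\|$), yielding the same constant $\beta^t\sqrt{n}LB_y$.
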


\begin{proof}
We observe that $\*M \nabla \*f(\*M \*y^\infty) = \frac{1}{n} \cdot 1_n \otimes \nabla f (\bar{y}^\infty)$ and hence $ \|\*M \nabla \*f(\*M \*y^\infty)\| = n^{-1} \|1_n \otimes \nabla f (\bar{y}^\infty)\|= (\sqrt{n})^{-1}\|\nabla f (\bar{y}^\infty)\|,$
where we obtain the last equality due to the fact that $\|1_n \otimes v\|^2 = n \|v\|^2$ for any vector $v$.

Moreover, $\*y^\infty$ is a critical point of~\eqref{eq:lyapunov_f} and therefore satisfies $\nabla \mL_t(\*y^\infty) = \*Z^t \nabla \*f(\*Z^t\*y^\infty) + \frac{1}{\alpha}\*Z^t\*y^\infty - \frac{1}{\alpha}\*Z^{2t}\*y^\infty = \mathbf{0}$.
From the double stochasticity of $\*Z$, multiplying the above relation with $\*M$ yields $\*M \nabla \mL_t(\*y^\infty) = \*M \nabla \*f(\*Z^t\*y^\infty) = \mathbf{0}$. After combining all the preceding results, we obtain,
\begin{equation*}
    \begin{split}
       \|\nabla f (\bar{x}^\infty)\| &= \sqrt{n} \|\*M\nabla \*f(\*M \*y^\infty) - \*M \nabla \*f(\*Z^t\*y^\infty)\|\\
       &\leq \sqrt{n} L \|\*M \*y^\infty - \*Z^t\*y^\infty\| \leq \beta^t \sqrt{n} L \|\*y^\infty\|,
    \end{split}
\end{equation*}
where used the spectral properties of $\*M$ and Assumption~\ref{assum:smoothness} to get the first inequality and the spectral properties of $\*Z$ to get the second inequality. Applying Lemma~\ref{lemma:bounded} yields the result of this Corollary.
\end{proof}

\subsection{Second order guarantees}


In this subsection, we provide second order guarantees for the NEAR-DGD$^t$ method. Specifically, using recent results stemming from dynamical systems theory, we will prove that NEAR-DGD$^t$ almost surely avoids the strict saddles of the Lyapunov function $\mL_t$ when initialized randomly. Hence, if $\mL_t$ satisfies the strict saddle property, NEAR-DGD$^t$ converges to minima of $\mL_t$ with probability $1$. We begin by listing a number of additional assumptions and definitions. 

\begin{assum}\textbf{(Differentiability)}
\label{assum:differ2}
The functions $\*f$ is $\mathcal{C}^2$.
\end{assum}
Assumption~\ref{assum:differ2} implies that the function $\mL_t$ is also $\mathcal{C}^2$.

\begin{defi}\textbf{(Differential of a mapping)}~[Ch.~3, \cite{AbsMahSep2008}]
\label{def:differ}
The differential of a mapping $g:\mathcal{X}\rightarrow \mathcal{X}$, denoted as $Dg(x)$, is a linear operator from $\mathcal{T} (x) \rightarrow \mathcal{T} (g(x))$, where $\mathcal{T} (x)$ is the tangent space of $\mathcal{X}$ at point $x$. Given a curve $\gamma$ in $\mathcal{X}$ with $\gamma(0)=x$ and $\frac{d \gamma}{dt}(0)= v \in \mathcal{T}(x)$, the linear operator is defined as $Dg(x)v = \frac{d(g \circ \gamma)}{dt}(0) \in \mathcal{T}(g(x))$. The determinant of the linear operator $\det(Dg(x))$ is the determinant of the matrix representing $Dg(x)$ with respect to an arbitrary basis.
\end{defi}

\begin{defi} 
\label{def:unstable}
\textbf{(Unstable fixed points)} The set of unstable fixed points $\mathcal{A}^\star_g$ of a mapping $g:\mathcal{X}\rightarrow \mathcal{X}$ is defined as $\mathcal{A}^\star_g = \{ x \in \mathcal{X}: g(x)=x, \max_i |\lambda_i(Dg(x))| > 1\}.$
\end{defi}

\begin{defi} 
\label{def:saddles}
\textbf{(Strict saddles)} The set of strict saddles $\mathcal{X}^\star$ of a function $f:\mathcal{X}\rightarrow \mathbb{R}$ is defined as $\mathcal{X}^\star = \{ x^\star \in \mathcal{X}: \nabla f(x^\star) = 0, \lambda_1(\nabla ^2 f(x^\star)) < 0\}.$
\end{defi}

We can express NEAR-DGD$^t$ as a mapping $g: \R^{np} \rightarrow \R^{np}$,
\begin{equation*}
\label{eq:near_dgd_map}
    g(\* y) = \*Z^t\*y - \alpha \nabla \*f(\*Z^t\*y),
\end{equation*}
with $ Dg(\*y)= \*Z^t \left(I- \alpha \nabla^2 \*f(\*Z^t\*y)\right)$.
Let us define the set of unstable fixed points $\mathcal{A}_g^\star$ of NEAR-DGD$^t$ and the set of strict saddles $\mathcal{Y}^\star$ of the Lyapunov function~\eqref{eq:lyapunov_f} following Def.~\ref{def:unstable} and~\ref{def:saddles}, respectively. Corollary~$1$ of~\cite{lee2017firstorder} implies that if $\det(Dg(\*y))\neq0$ for all $\*y \in \mathbb{R}^{np}$ and $\mathcal{Y}^\star \subset \mathcal{A}^\star_g$, then NEAR-DGD$^t$ almost surely avoids the strict saddles of~\eqref{eq:lyapunov_f}. We will show that this is indeed the case in Theorem~\ref{thm:2nd_order}.


\begin{thm}\textbf{(Convergence to 2nd order stationary points)} 
\label{thm:2nd_order}
Let $\{\*y_k\}$ be the sequence of iterates generated by NEAR-DGD$^t$ under steplength $\alpha < 1/L$. Then if the Lyapunov function $\mL_t$ satisfies the strict saddle property, $\{\*y_k\}$ converges almost surely to $2^{nd}$ order stationary points of $\mL_t$ under random initialization.
\end{thm}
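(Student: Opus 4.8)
The plan is to reduce the claim to the two hypotheses of Corollary~1 of~\cite{lee2017firstorder} recalled in the paragraph preceding the theorem, namely (a) $\det(Dg(\*y))\neq 0$ for every $\*y\in\R^{np}$, and (b) $\mathcal{Y}^\star\subset\mathcal{A}^\star_g$. Granting these, that corollary guarantees that the set of initial points $\*y_0$ from which $\{\*y_k\}$ converges to a strict saddle of $\mL_t$ has Lebesgue measure zero. Since $\alpha<1/L<2/L$, Theorem~\ref{thm:global_conv} (available because Assumptions~\ref{assum:symmetry}--\ref{assum:coercive} and~\ref{assum:kl} are in force) guarantees that $\{\*y_k\}$ converges to some critical point of $\mL_t$; combining this with the strict saddle property (every critical point of $\mL_t$ is either a strict saddle or a local minimizer), the limit is almost surely a local minimizer, i.e.\ a second order stationary point of $\mL_t$. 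So the whole proof amounts to verifying (a) and (b), for which Assumption~\ref{assum:differ2} (so that $\nabla^2\*f$ exists and $g$ is $\mathcal{C}^1$) is used throughout.

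For (a), I would write $Dg(\*y)=\*Z^t(I-\alpha\nabla^2\*f(\*Z^t\*y))$, so that $\det(Dg(\*y))=\det(\*Z^t)\det(I-\alpha\nabla^2\*f(\*Z^t\*y))$. The first factor is nonzero because $\*W$, hence $\*Z^t=\*W^t\otimes I_p$, is positive definite by Assumption~\ref{assum:symmetry}. For the second factor, $L$-Lipschitz continuity of $\nabla\*f$ (Assumption~\ref{assum:smoothness}) yields $-LI\preceq\nabla^2\*f(\cdot)\preceq LI$, so with $\alpha<1/L$ every eigenvalue of $I-\alpha\nabla^2\*f(\*Z^t\*y)$ lies in $(1-\alpha L,\,1+\alpha L)\subset(0,2)$ and the factor is nonzero as well. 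This is precisely where the tighter steplength bound $\alpha<1/L$ (rather than $\alpha<2/L$) is needed: under $\alpha<2/L$ these eigenvalues would only be confined to an interval containing $0$.

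For (b), fix $\*y^\star\in\mathcal{Y}^\star$, so $\nabla\mL_t(\*y^\star)=\mathbf{0}$ and $\lambda_1(\nabla^2\mL_t(\*y^\star))<0$. I would first check that $\*y^\star$ is a fixed point of $g$: differentiating~\eqref{eq:lyapunov_f} gives $\nabla\mL_t(\*y)=-\alpha^{-1}\*Z^t(g(\*y)-\*y)$, and since $\*Z^t$ is nonsingular the critical points of $\mL_t$ coincide with the fixed points of $g$. Next, since $\*Z^t\succ 0$ has a positive definite square root $\*Z^{t/2}$, the matrix $Dg(\*y^\star)$ is similar to the \emph{symmetric} matrix $S:=\*Z^{t/2}(I-\alpha\nabla^2\*f(\*Z^t\*y^\star))\*Z^{t/2}=\*Z^t-\alpha\*Z^{t/2}\nabla^2\*f(\*Z^t\*y^\star)\*Z^{t/2}$, hence $Dg(\*y^\star)$ has the same real eigenvalues as $S$. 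Using $\nabla^2\mL_t(\*y^\star)=\*Z^t\nabla^2\*f(\*Z^t\*y^\star)\*Z^t+\alpha^{-1}\*Z^t(I-\*Z^t)$ and taking a unit eigenvector $v$ for $\lambda_1<0$, the substitution $w:=\*Z^{t/2}v\neq\mathbf{0}$ transforms $v'\nabla^2\mL_t(\*y^\star)v<0$, after multiplying by $\alpha$ and rearranging, into $w'Sw>\|w\|^2$. Consequently $\lambda_{\max}(S)\geq w'Sw/\|w\|^2>1$, so $\max_i|\lambda_i(Dg(\*y^\star))|>1$ and $\*y^\star\in\mathcal{A}^\star_g$. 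The main obstacle is exactly this last step: because $\*Z^t$ and $\nabla^2\*f$ need not commute, $Dg(\*y^\star)$ is not symmetric and its spectrum is not immediately accessible; the symmetrizing change of variables $w=\*Z^{t/2}v$, available thanks to the positive-definiteness of $\*W$ in Assumption~\ref{assum:symmetry}, is what converts the negative curvature of $\mL_t$ at a strict saddle into an eigenvalue of $Dg(\*y^\star)$ of modulus larger than $1$.
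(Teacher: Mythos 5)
Your proposal is correct and follows essentially the same route as the paper: verify $\det(Dg(\*y))\neq 0$ via the determinant factorization and $\alpha<1/L$, identify critical points of $\mL_t$ with fixed points of $g$ using the nonsingularity of $\*Z^t$, symmetrize $Dg(\*y^\star)$ by conjugation with $\*Z^{t/2}$ so that the negative curvature of $\nabla^2\mL_t(\*y^\star)$ yields an eigenvalue of $Dg(\*y^\star)$ larger than $1$, and conclude with Corollary~1 of~\cite{lee2017firstorder} together with Theorem~\ref{thm:global_conv}. The only cosmetic difference is that you execute the congruence step by a direct Rayleigh-quotient computation with $w=\*Z^{t/2}v$ applied to your symmetric matrix $S$, whereas the paper reaches the same conclusion by citing Sylvester's law of inertia for $\*P=\alpha \*Z^{-t/2}\nabla^2\mL_t(\*y^\star)\*Z^{-t/2}$, which equals $I-S$.
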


\begin{proof}
We begin this proof by showing that $\det(Dg(\*y))\neq 0$ for every $\*y \in \mathbb{R}^{np}$. Let $\lambda_i(\nabla^2 \*f(\*Z^t\*y))$ be the eigenvalues of the Hessian $\nabla^2 \*f(\*Z^t\*y)$. Assumption~\ref{assum:smoothness} implies that $\lambda_i(\nabla^2 \*f(\*Z^t\*y)) < L$ for all $i\in \{1,...,np\}$. Using standard properties of the determinant, we obtain, $\det\left(Dg(\*x)\right) = \det(\*Z^t)\det(I - \alpha \nabla^2 \*f(\*Z^t\*y))= \left(\prod_i \lambda^t_i(\*Z) \right)\left(\prod_i (1-\alpha \lambda_i(\nabla^2 \*f(\*Z^t\*y))\right)$.
Thus, $\det\left(Dg(\*x)\right) \neq 0$ by the positive-definiteness of $\*Z$ and $\alpha < 1/L$.

We will now confirm that $\mathcal{Y}^\star \subset \mathcal{A}^\star_g$. Every critical point $\*y^\star$ of~\eqref{eq:lyapunov_f} satisfies $\nabla\mathcal{L}_t(\*y^\star)=\mathbf{0}$, namely $ \*Z^t \nabla \*f(\*Z^t\*y^\star) + \frac{1}{\alpha} \*Z^t\*y^\star - \frac{1}{\alpha} \*Z^{2t} \*y^\star = \mathbf{0}$.
Since $\*Z$ is positive-definite and by extension non-singular, we can multiply both sides of the equality above with $\alpha \*Z^{-t}$ and re-arrange the resulting terms to obtain $\*y^\star = g(\*y^\star)$. Finally, the Hessian of~\eqref{eq:lyapunov_f} at $\*y^\star$ is given by,
\begin{equation}
\label{eq:hessian}
    \begin{split}
        \nabla^2\mathcal{L}_t(\*y^\star) &= \*Z^t \nabla^2 \*f(\*Z^t\*y^\star) \*Z^t+ \frac{1}{\alpha} \*Z^t(I -\*Z^t)\\
        &=\frac{1}{\alpha}\left(I-Dg(\*y^\star)\right)\*Z^t.
    \end{split}
\end{equation}
We define the matrix $\*P:=\alpha \*Z^{-\frac{t}{2}}\nabla^2\mathcal{L}_t(\*y^\star) \*Z^{-\frac{t}{2}}$. Using the positive-definiteness of $\*Z$, we obtain from~\eqref{eq:hessian} $ I-Dg(\*y^\star)= \alpha \nabla^2\mathcal{L}_t(\*y^\star) \*Z^{-t} =  \*Z^{\frac{t}{2}} \*P \*Z^{-\frac{t}{2}},$
which implies that $\left(I-Dg(\*y^\star)\right)$ and $\*P$ are similar matrices and have identical spectrums. Moreover, the matrix $\*Z^{-\frac{t}{2}}$ is symmetric by Assumption~\ref{assum:symmetry}. Hence, $\*P$ and $\left(\alpha\nabla^2\mathcal{L}_t(\*y^\star)\right)$ are congruent and by Sylvester's law of inertia~[Theorem 4.5.8,~\cite{horn_matrix_2012}] they have the same number of negative eigenvalues. Given that $\nabla^2\mathcal{L}_t(\*y^\star)$ has at least one negative eigenvalue by Def.~\ref{def:saddles}, we conclude that so does $\*P$ and there exists index $i$ such that $1-\lambda_i(Dg(\*y^\star)) < 0 $ or $ \lambda_i(Dg(\*y^\star)) > 1$. Applying~[Corollary~$1$,~\cite{lee2017firstorder}] establishes the desired result.
\end{proof}
Before we conclude this section, we make one final remark on the asymptotic behavior of NEAR-DGD$^t$ as the parameter $t$ becomes large. 
\begin{cor}(\textbf{Convergence to SOS})
Let $\{\*x_k\}$ and $\{\*y_k\}$ be the sequences of NEAR-DGD$^t$ iterates produced by~\eqref{eq:near_dgd_x} and~\eqref{eq:near_dgd_y}, respectively, from initial point $\*y_0$ with $t(k)=t \in \mathbb{N}^+$ and steplength $\alpha < 1/L$. Moreover, suppose that $\*y^\infty$ is the limit point of NEAR-DGD$^t$ and let $\*x^\infty = \*Z^t \*y^\infty=[(x_1^\infty)',...,(x_n^\infty)']'$. Then $x_i^\infty=x_j^\infty$ for all $i\neq j$ and  $x_i^\infty$ approaches the $2^{nd}$ order stationary solutions (SOS) of Problem~\ref{eq:prob_orig} as $t\rightarrow \infty$ for all $i \in \mathcal{V}$.
\end{cor}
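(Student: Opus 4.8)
I would prove the Corollary by letting the consensus parameter $t$ grow and combining three facts already in hand: the consensus error bound of Lemma~\ref{lemma:bounded_dist}, the first-order estimate of Corollary~\ref{lemma:dist_optimality}, and the second-order guarantee of Theorem~\ref{thm:2nd_order}. First I would note that, since $\mathcal{G}$ is connected and $\*W$ obeys Assumption~\ref{assum:symmetry}, the eigenvalue $1$ of $\*W$ is simple, so $\beta=\lambda_{n-1}(\*W)\in(0,1)$ and $\beta^{t}\to 0$ as $t\to\infty$. Passing $k\to\infty$ in Lemma~\ref{lemma:bounded_dist} gives $\|x_i^\infty-\bar x^\infty\|\le\beta^{t}B_y$, hence $\|x_i^\infty-x_j^\infty\|\le 2\beta^{t}B_y$: this is the consensus claim, exact in the limit $t\to\infty$ and with an explicit rate for finite $t$. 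It matters here that $B_x,B_y$ from Lemma~\ref{lemma:bounded} are independent of $t$, which is precisely what keeps these bounds meaningful as $t$ grows.

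For the first-order part, Corollary~\ref{lemma:dist_optimality} already supplies $\|\nabla f(\bar x^\infty)\|\le\beta^{t}\sqrt{n}LB_y\to 0$. For the second-order part I would invoke Theorem~\ref{thm:2nd_order}: under random initialization, the strict saddle property, and $\alpha<1/L$, almost surely $\*y^\infty$ is a second-order stationary point of $\mathcal{L}_t$, so $\nabla^2\mathcal{L}_t(\*y^\infty)\succeq 0$. Substituting $\*v=1_n\otimes u$, $u\in\mathbb{R}^p$, into the Hessian formula~\eqref{eq:hessian} and using $\*Z^{t}(1_n\otimes u)=1_n\otimes u$ together with the symmetry of $\*Z^{t}$ (so that the term $\frac{1}{\alpha}\*Z^{t}(I-\*Z^{t})$ contributes $0$ to the quadratic form), the quadratic form collapses to $\sum_{i=1}^n u'\nabla^2 f_i(x_i^\infty)u\ge 0$; hence $\sum_{i=1}^n\nabla^2 f_i(x_i^\infty)\succeq 0$.

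It then remains to transfer this to $\nabla^2 f(\bar x^\infty)=\sum_{i=1}^n\nabla^2 f_i(\bar x^\infty)$. Writing $\nabla^2 f(\bar x^\infty)=\sum_i\nabla^2 f_i(x_i^\infty)+\sum_i\big(\nabla^2 f_i(\bar x^\infty)-\nabla^2 f_i(x_i^\infty)\big)$, the first sum is positive semidefinite by the previous step, and by Assumption~\ref{assum:differ2} each $\nabla^2 f_i$ is uniformly continuous on the compact ball $\{\|x\|\le B_x\}$, which contains every $x_i^\infty$ and $\bar x^\infty$ for all $t$ by Lemma~\ref{lemma:bounded}; so the second sum has operator norm at most $n\,\omega(\beta^{t}B_y)$ for a common modulus of continuity $\omega$, and this vanishes as $t\to\infty$. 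Hence $\lambda_1(\nabla^2 f(\bar x^\infty))\ge-n\,\omega(\beta^{t}B_y)\to 0$. Since $\{\bar x^\infty\}_t$ is bounded (Lemma~\ref{lemma:bounded} again), any convergent subsequence in $t$ has a limit $x^\star$ with $\nabla f(x^\star)=0$ and $\nabla^2 f(x^\star)\succeq 0$ by continuity, i.e.\ a second-order stationary point of Problem~\eqref{eq:prob_orig}; combined with $\|x_i^\infty-\bar x^\infty\|\to 0$ this is the precise content of the Corollary.

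I expect the second-order transfer to be the delicate step. The Hessian condition on $\mathcal{L}_t$ only certifies the quadratic form along the consensus subspace, so one must verify that restricting to $\*v=1_n\otimes u$ is exactly what annihilates the penalty $\frac{1}{\alpha}\*Z^{t}(I-\*Z^{t})$ (this uses $\*W^{t}1_n=1_n$ and symmetry), and---more delicately---that the surviving Hessians are evaluated at the local points $x_i^\infty$ rather than at $\bar x^\infty$, so closing the gap genuinely relies on the $t$-uniform consensus estimate $\beta^{t}B_y\to 0$ paired with uniform continuity of $\nabla^2\*f$ on a $t$-independent compact set. A secondary point worth flagging in the write-up is the reading of ``$x_i^\infty=x_j^\infty$'': for a fixed finite $t$ only $\|x_i^\infty-x_j^\infty\|\le 2\beta^{t}B_y$ can be claimed, with exact equality emerging only as $t\to\infty$.
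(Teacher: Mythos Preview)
Your proposal is correct and follows essentially the same route as the paper: invoke Lemma~\ref{lemma:bounded_dist} and Corollary~\ref{lemma:dist_optimality} for the consensus and first-order parts, then use Theorem~\ref{thm:2nd_order} to obtain $\nabla^2\mathcal{L}_t(\*y^\infty)\succeq 0$, restrict to the consensus subspace to kill the penalty term $\frac{1}{\alpha}\*Z^t(I-\*Z^t)$, and finally pass from $\nabla^2 f_i(x_i^\infty)$ to $\nabla^2 f(\bar x^\infty)$ via Hessian continuity as $t\to\infty$. The only cosmetic difference is that the paper performs the restriction by conjugating with $\*M$ and then cites Sylvester's law, whereas you evaluate the quadratic form directly at $\*v=1_n\otimes u$; your version is arguably cleaner (no Sylvester needed) and you are more explicit about the uniform-continuity/subsequence step that the paper compresses into one line.
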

\begin{proof}
By Theorems~\ref{thm:global_conv} and~\ref{thm:2nd_order}, we have $\{\*y_k\} \rightarrow \*y^\infty$, where $\*y^\infty$ is a minimizer of $\mL_t$. Since $\*Z$ is non-singular, we also have $\{\*x_k\} \rightarrow \*x^\infty = \*Z^t \*y^\infty$. As $t\rightarrow \infty$, Lemmas~\ref{lemma:bounded_dist} and~\ref{lemma:dist_optimality} yield $\|x_i^\infty - \bar{x}^\infty\|\rightarrow 0$ and $\|\nabla f(\bar{x}^\infty)\| \rightarrow 0$, respectively, implying that $x_{i}^\infty$ and $\bar{x}^\infty$ approach each other and the critical points of $f$. Finally, $\nabla^2 \mL_t(\*y^\infty) \succeq 0$ by Theorem~\ref{thm:2nd_order}, where $\nabla^2 \mL_t(\*y^\infty)=\*Z^t \nabla^2 \*f(\*x^\infty)\*Z^t + \alpha^{-1}\*Z^t(I-\*Z^t)$. Multiplying this relation with the matrix $\*M$ on both sides, we obtain $\*M\nabla^2  \mL_t(\*y^\infty) \*M = \*M\nabla^2 \*f(\*x^\infty)\*M$. As $t\rightarrow \infty$, Lemma~\ref{lemma:bounded_dist} yields $\*M\nabla^2 \mL_t(\*y^\infty)\*M \rightarrow \*M\nabla^2 \*f(1_n \otimes \bar{x}^\infty)\*M=n^{-2} 1_n 1_n' \otimes \nabla^2 f(\bar{x}^\infty)$. Therefore, $\nabla^2 f(\bar{x}^\infty)\succeq 0$ by Sylvester's law of inertia for congruent matrices~[Theorem 4.5.8,~\cite{horn_matrix_2012}]. Based on the above, we conclude that NEAR-DGD$^t$ approaches the $2^{nd}$ order stationary solutions of Problem~\ref{eq:prob_orig} as $t\rightarrow \infty$.
\end{proof}

\section{Numerical Results}
\label{sec:numerical}
We evaluate the empirical performance of NEAR-DGD on the following regularized quadratic problem,
\begin{equation*}
    \min_{x \in \mathbb{R}^p} f(x) = \frac{1}{2}\sum_{i=1}^n \left(\|x\|^2_{Q^i}\right) + \frac{1}{4}\|x\|^4_{D_I},
\end{equation*}
where $I\in\{1,...,p\}$ is some positive index and $Q^i \in \mathbb{R}^{p \times p}$ and $D_I\in \mathbb{R}^{p \times p}$ are diagonal matrices constructed as follows: $q^i_{jj} < 0$ if $j= I$ and $q^i_{jj} > 0$ otherwise, and $D_I = c \cdot e_I e_I'$, where $c>0$ is a constant and $e_I$ is the indicator vector for the $I^{th}$ element. It is easy to check that $f$ has a unique saddle point at $x=\mathbf{0}$ and two minima at $x^\star=\pm\frac{1}{c} \left(\sqrt{\sum_{i=1}^n -q^i_{II}}\right)e_I$. We can distribute this problem to $n$ nodes by setting $f_i(x)=\frac{1}{2}\|x\|^2_{Q^i} + \frac{1}{4n}\|x\|^4_{D_I}$. Moreover, each $f_i$ has Lipschitz gradients in any compact subset of $\mathbb{R}^{p}$. 

We set $p=I=4$ for the purposes of our numerical experiment. The matrices $Q^i$ were constructed randomly with $q^i_{jj} \in (-1,0)$ for $j = I$ and $q^i_{jj} \in (0,1)$ otherwise, and the parameter $c$ of matrix $D_I$ was set to $1$. We allocated each $f_i$ to a unique node in a network of size $n=12$ with ring graph topology. We tested $6$ methods in total, including DGD~\cite{NedicSubgradientConsensus,zeng_nonconvex_2018}, DOGT (with doubly stochastic consensus matrix)~\cite{daneshmand_second-order_2020}, and  $4$ variants of the NEAR-DGD method: $i$) NEAR-DGD$^1$ (one consensus round per gradient evaluation), $ii$) NEAR-DGD$^5$ ($5$ consensus rounds per gradient evaluation), $iii$) a variant of NEAR-DGD where the sequence of consensus rounds increases by $1$ at every iteration, and to which we will refer as NEAR-DGD$^+$, and $iv$) a practical variant of NEAR-DGD$^+$, where starting from one consensus round/iteration, we double the number of consensus rounds every $100$ gradient evaluations. We will refer to this last modification as NEAR-DGD$^+_{100}$. All algorithms were initialized from the same randomly chosen point in the interval $[-1,1]^{np}$. The stepsize was manually tuned to $\alpha=10^{-1}$ for all methods.

In Fig.~\ref{fig:errors}, we plot the objective function error $f(\bar{x}_k)-f^\star$ where $f^\star = f(x^\star)$ (Fig.~\ref{fig:error_f}) and the distance $\|\bar{x}_k\|$ of the average iterates to the saddle point $x=\mathbf{0}$ (Fig.~\ref{fig:error_0}) versus the number of iterations/gradient evaluations  for all methods. In Fig.~\ref{fig:error_f}, we observe that convergence accuracy increases with the value of the parameter $t$ of NEAR-DGD$^t$, as predicted by our theoretical results. NEAR-DGD$^1$ performs comparably to DGD, while the two variants of NEAR-DGD paired with increasing sequences of consensus rounds per iteration, i.e. NEAR-DGD$^+$ and NEAR-DGD$^+_{100}$, achieve exact convergence to the optimal value with faster rates compared to NEXT. All methods successfully escape the saddle point of $f$ with approximately the same speed~(Fig.~\ref{fig:error_0}). We noticed that the trends in Fig.~\ref{fig:error_0} were very sensitive to small changes in problem parameters and the selection of initial point. 

In Fig.~\ref{fig:cost}, we plot the objective function error $f(\bar{x}_k)-f^\star$ versus the cumulative application cost (per node) for all methods, where we calculated the cost per iteration using the framework proposed in~\cite{berahas_balancing_2019},
\begin{equation*}
\label{eq:cost_frame}
    \text{Cost} = c_c \times \# \text{Communications} +  c_g \times \# \text{Computations},
\end{equation*}
where $c_c$ and $c_g$ are constants representing the application-specific costs of one communication and one computation operation, respectively. In Fig.~\ref{fig:cost_cheap}, the costs of communication and computation are equal ($c_c=c_g$) and NEXT outperforms NEAR-DGD$^+$ and NEAR-DGD$^+_{100}$ since it requires only two communication rounds per gradient evaluation to achieve exact convergence. Conversely, in Fig.~\ref{fig:cost_exp}, the cost of communication is relatively low compared to the cost of computation ($c_c = 10^{-2} c_g$). In this case, NEAR-DGD$^+$ converges to the optimal value faster than the remaining methods in terms of total application cost.

\begin{figure}[!t]
\centering
\subfloat[Objective function error]{\includegraphics[width=.24\textwidth]{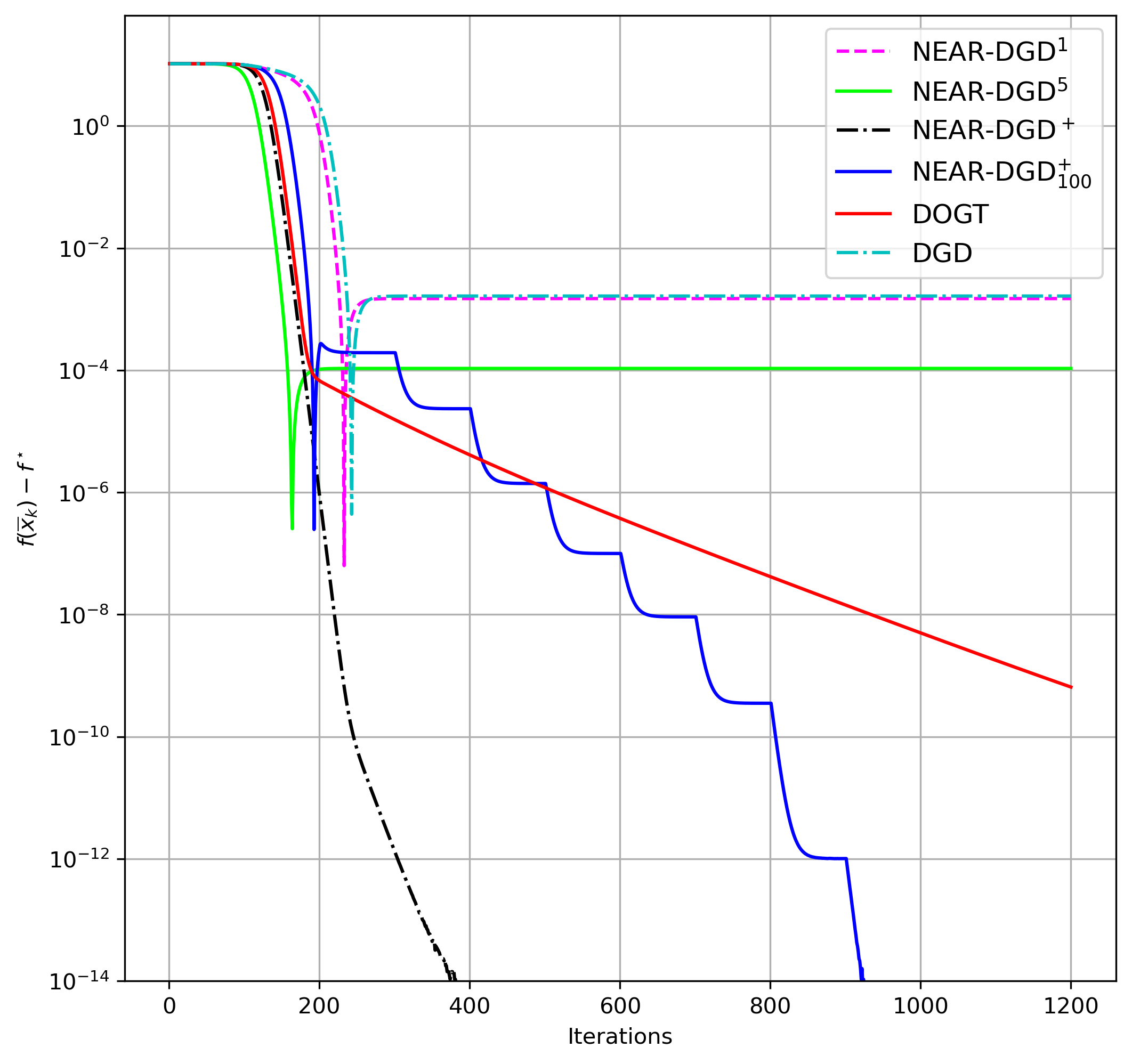}%
\label{fig:error_f}}
\hfil
\subfloat[Distance to $x=\mathbf{0}$ (saddle)]{\includegraphics[width=.24\textwidth]{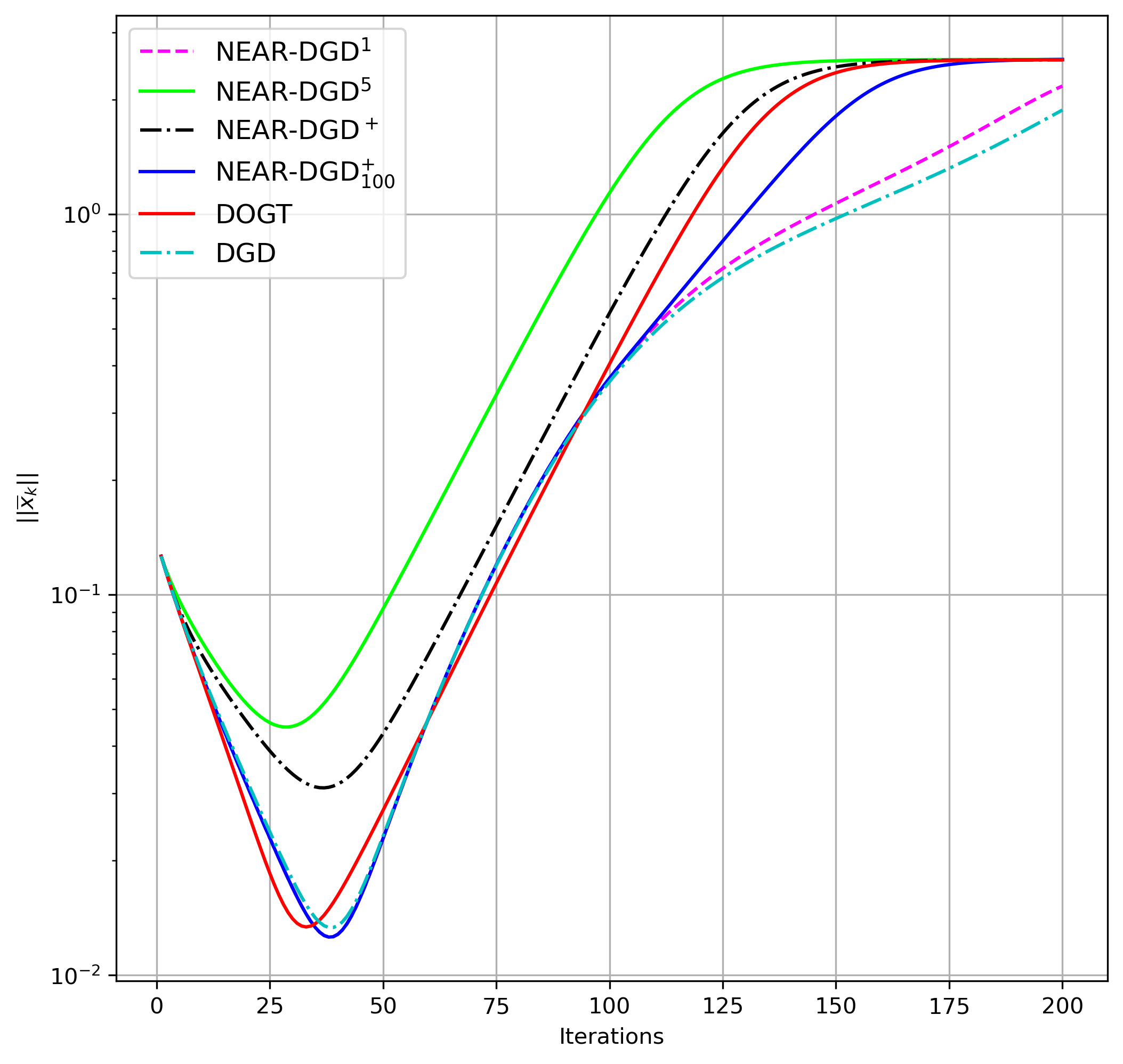}%
\label{fig:error_0}}
\caption{Distance to $f^\star$ (left) and to saddle point (right) }
\label{fig:errors}
\end{figure}

\begin{figure}[!t]
\centering
\subfloat[$c_g=1$, $c_c=1$]{\includegraphics[width=.24\textwidth]{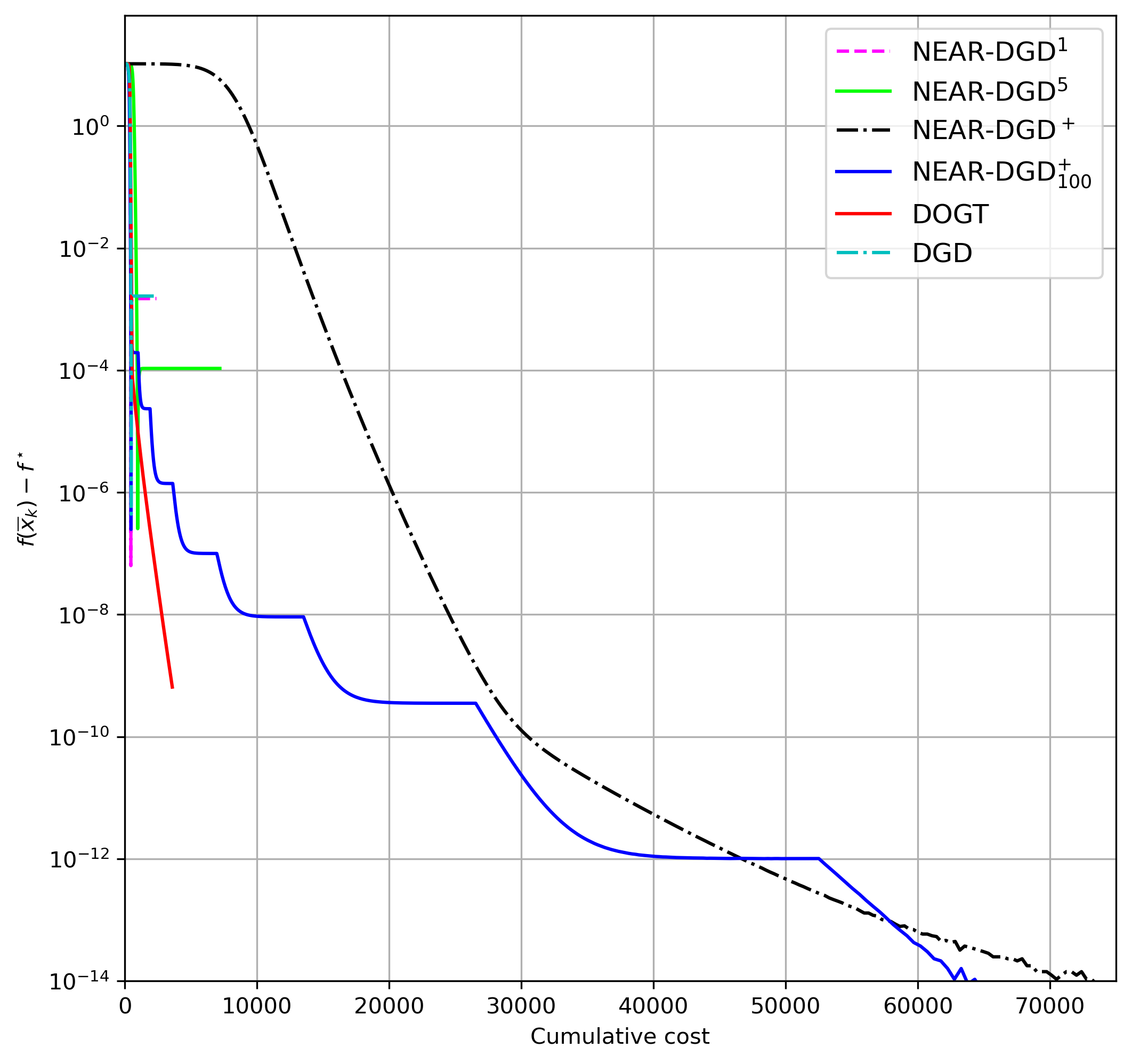}%
\label{fig:cost_cheap}}
\hfil
\subfloat[$c_g=1$, $c_c=10^{-2}$]{\includegraphics[width=.24\textwidth]{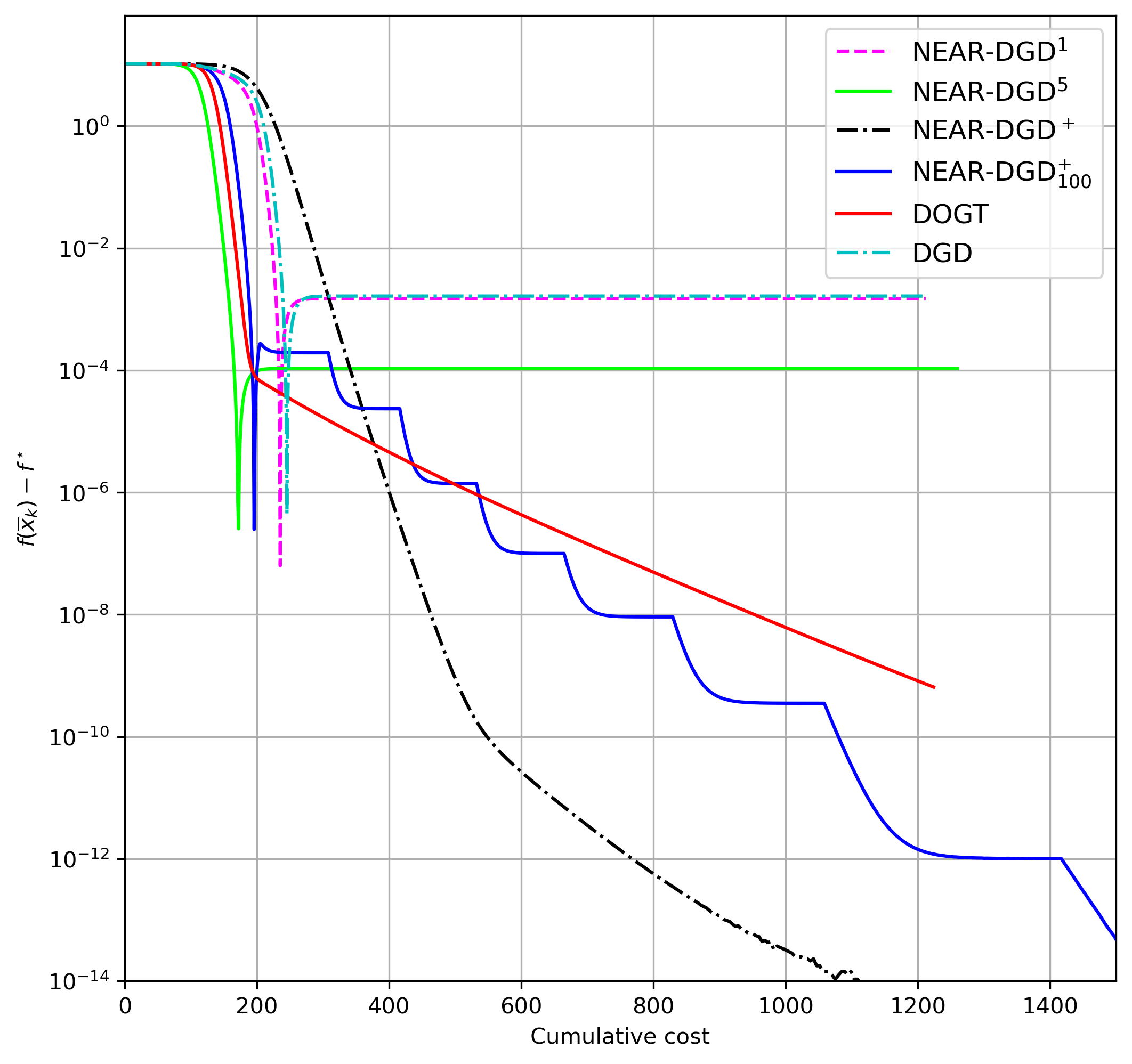}%
\label{fig:cost_exp}}
\caption{Objective function error as a function of cumulative application cost (per node)}
\label{fig:cost}
\end{figure}

\section{Conclusion}
\label{sec:conclusion}
NEAR-DGD~\cite{berahas_balancing_2019} is a distributed first order method that permits adjusting the amounts of computation and communication carried out at each iteration to balance convergence accuracy and total application cost. We have extended to the nonconvex setting the analysis of NEAR-DGD$^t$, a variant of NEAR-DGD performing a fixed number of communication rounds at every iteration, which is controlled by the parameter $t$. Given a connected, undirected network with general topology, we have shown that NEAR-DGD$^t$ converges to minimizers of a customly designed Lyapunov function and locally approaches the minimizers of the original objective function as $t$ becomes large. Our numerical results confirm our theoretical analysis and indicate that NEAR-DGD can achieve exact convergence to the $2^{nd}$ order stationary points of Problem~\eqref{eq:prob_orig} when the number of consensus rounds increases over time.






\bibliographystyle{IEEEtran}
\bibliography{ref, nonconvex, KL, stochastic}




\end{document}